\journal{Discrete Applied Mathematics}
\def\ps@pprintTitle{\def\@oddfoot{\textit{\footnotesize Submitted to Discrete Applied Mathematics}\hfill\textit{\footnotesize 4 February 2016}}}
\newtheorem{theo}{Theorem} 
\newtheorem{lem}[theo]{Lemma}
\newtheorem{defn}[theo]{Definition}
\def\squareforqed{\hbox{\rlap{$\sqcap$}$\sqcup$}}
\def\qed{\ifmmode\squareforqed\else{\unskip\nobreak\hfil
\penalty50\hskip1em\null\nobreak\hfil\squareforqed
\parfillskip=0pt\finalhyphendemerits=0\endgraf}\fi}
\def\endenv{\ifmmode\;\else{\unskip\nobreak\hfil
\penalty50\hskip1em\null\nobreak\hfil\;
\parfillskip=0pt\finalhyphendemerits=0\endgraf}\fi}
\newenvironment{remark}{\noindent \textbf{{Remark~}}}{}
\mathchardef\ordinarycolon\mathcode`\:
\def\vcentcolon{\mathrel{\mathop\ordinarycolon}}
\newcommand{\nc}{\newcommand}
\nc{\rnc}{\renewcommand}
\nc{\beq}{\begin{equation}}
\nc{\eeq}{\end{equation}}
\nc{\beqa}{\begin{eqnarray}}
\nc{\eeqa}{\end{eqnarray}}
\nc{\lbar}[1]{\overline{#1}}
\nc{\bra}[1]{\langle#1|}
\nc{\ket}[1]{|#1\rangle}
\nc{\ketbra}[2]{|#1\rangle\!\langle#2|}
\nc{\braket}[2]{\langle#1|#2\rangle}
\nc{\proj}[1]{| #1\rangle\!\langle #1 |}
\nc{\avg}[1]{\langle#1\rangle}
\nc{\Rank}{\operatorname{Rank}}
\nc{\smfrac}[2]{\mbox{$\frac{#1}{#2}$}}
\nc{\tr}{\operatorname{Tr}}
\nc{\ox}{\otimes}
\nc{\dg}{\dagger}
\nc{\dn}{\downarrow}
\nc{\cA}{{\cal A}}
\nc{\cB}{{\cal B}}
\nc{\cC}{{\cal C}}
\nc{\cD}{{\cal D}}
\nc{\cE}{{\cal E}}
\nc{\cF}{{\cal F}}
\nc{\cG}{{\cal G}}
\nc{\cH}{{\cal H}}
\nc{\cI}{{\cal I}}
\nc{\cJ}{{\cal J}}
\nc{\cK}{{\cal K}}
\nc{\cL}{{\cal L}}
\nc{\cM}{{\cal M}}
\nc{\cN}{{\cal N}}
\nc{\cO}{{\cal O}}
\nc{\cP}{{\cal P}}
\nc{\cQ}{{\cal Q}}
\nc{\cR}{{\cal R}}
\nc{\cS}{{\cal S}}
\nc{\cT}{{\cal T}}
\nc{\cX}{{\cal X}}
\nc{\cY}{{\cal Y}}
\nc{\cZ}{{\cal Z}}
\nc{\csupp}{{\operatorname{csupp}}}
\nc{\qsupp}{{\operatorname{qsupp}}}
\nc{\var}{{\operatorname{var}}}
\nc{\rar}{\rightarrow}
\nc{\lrar}{\longrightarrow}
\nc{\polylog}{{\operatorname{polylog}}}
\nc{\wt}{{\operatorname{wt}}}
\nc{\av}[1]{{\left\langle {#1} \right\rangle}}
\nc{\RR}{{{\mathbb R}}}
\nc{\CC}{{{\mathbb C}}}
\nc{\FF}{{{\mathbb F}}}
\nc{\NN}{{{\mathbb N}}}
\nc{\ZZ}{{{\mathbb Z}}}
\nc{\PP}{{{\mathbb P}}}
\nc{\QQ}{{{\mathbb Q}}}
\nc{\UU}{{{\mathbb U}}}
\nc{\EE}{{{\mathbb E}}}
\nc{\id}{{\operatorname{id}}}
\nc{\CHSH}{{\operatorname{CHSH}}}
\nc{\be}{\begin{equation}}
\nc{\ee}{{\end{equation}}}
\nc{\bea}{\begin{eqnarray}}
\nc{\eea}{\end{eqnarray}}
\nc{\Hom}[2]{\mbox{Hom}(\CC^{#1},\CC^{#2})}
\nc{\rU}{\mbox{U}}
\nc{\ob}[1]{#1}
\begin{document}

\begin{frontmatter}

\title{A new property of the Lov\'asz number \protect\\ 
       and duality relations between graph parameters\tnoteref{mytitlenote}}

\tnotetext[mytitlenote]{Special issue in memory of Levon Khachatrian, 1954--2004}

\author[ICREAaddress,ICFOaddress]{Antonio Ac\'in}

\author[UTSaddress,Tsinghuaaddress]{Runyao Duan}

\author[UCLaddress]{David E. Roberson}

\author[ICFOaddress,UoBaddress]{Ana Bel\'en Sainz}

\author[ICREAaddress,UABaddress]{Andreas Winter}
\ead{andreas.winter@uab.cat}

\address[ICREAaddress]{ICREA -- Instituci\'{o} Catalana de Recerca i Estudis Avan\c{c}ats, 
                       Pg.~Llu\'{\i}s Companys, 23, 08010 Barcelona, Spain}

\address[ICFOaddress]{ICFO -- Institut de Ciencies Fotoniques, 
                      The Barcelona Institute of Science and Technology, \protect\\
                      08860 Castelldefels (Barcelona), Spain}

\address[UTSaddress]{Centre for Quantum Computation and Intelligent Systems (QCIS), 
                     Faculty of Engineering and Information Technology, 
                     University of Technology Sydney, Sydney, NSW 2007, Australia}

\address[Tsinghuaaddress]{State Key Laboratory of Intelligent Technology and Systems, 
                 Tsinghua National Laboratory for Information Science and Technology, 
                 Department of Computer Science and Technology, Tsinghua University, Beijing 100084, China}



\address[UCLaddress]{Department of Computer Science, University College London, Gower Street, 
                     London WC1E 6BT, United Kingdom}

\address[UoBaddress]{School of Physics, H H Wills Physics Laboratory, 
                     University of Bristol, Tyndall Avenue, Bristol BS8 1TL, U.K.}

\address[UABaddress]{F\'{\i}sica Te\`{o}rica: Informaci\'{o} i Fen\`{o}mens Qu\`{a}ntics, 
                     Universitat Aut\`{o}noma de Barcelona, 08193 Bellaterra (Barcelona), Spain}

\begin{abstract}
  We show that for any graph $G$,
  by considering ``activation'' through the strong product
  with another graph $H$, the relation $\alpha(G) \leq \vartheta(G)$
  between the independence number and the Lov\'{a}sz number of $G$
  can be made arbitrarily tight: Precisely, the inequality
  \[
    \alpha(G \boxtimes H) \leq \vartheta(G \boxtimes H) = \vartheta(G)\,\vartheta(H)
  \]
  becomes asymptotically an equality for a suitable sequence of 
  ancillary graphs $H$.
  
  This motivates us to look for other products of graph parameters of
  $G$ and $H$ on the right hand side of the above relation. For instance,
  a result of Rosenfeld and Hales states that
  \[
    \alpha(G \boxtimes H) \leq \alpha^*(G)\,\alpha(H),
  \]
  with the fractional packing number $\alpha^*(G)$, and for every $G$ there
  exists $H$ that makes the above an equality; conversely,
  for every graph $H$ there is a $G$ that attains equality.
  
  These findings constitute some sort of duality of graph parameters, mediated
  through the independence number, under which $\alpha$ and $\alpha^*$
  are dual to each other, and the Lov\'{a}sz number $\vartheta$ is
  self-dual. We also show duality of Schrijver's and Szegedy's variants 
  $\vartheta^-$ and $\vartheta^+$ of the Lov\'{a}sz number, 
  and explore analogous notions for the chromatic number under strong and
  disjunctive graph products. 
\end{abstract}

\begin{keyword}
  Graph, Lov\'{a}sz number, independence number, chromatic number, fractional packing number.
\end{keyword}
	
\end{frontmatter}

\section{Independence number of a graph and its relaxations}
\label{sec:intro}
In the present paper we consider graphs $G=(V,E)$, which throughout will be
undirected and without loops \cite{Berge1973}. We shall be using
the Lov\'{a}sz convention~\cite{Lovasz1979}, writing $v\sim w$ to denote
$vw\in E$ or $v=w$.
We shall be concerned with various graph parameters, starting
from the \emph{independence number} (aka \emph{stability number}
or \emph{packing number})
\begin{equation}
  \label{eq:alpha}
  \alpha(G) = \max |I| \text{ s.t. } I\subset V \text{ is an independent set},
\end{equation}
where $I$ is called an \emph{independent} (or \emph{stable}) \emph{set} 
if the induced graph $G|_I$ 
is a graph with no edges, i.e.~the complement of the complete graph on the
vertices $I$. Computing $\alpha$ is well-known to be NP-complete \cite{Karp1972}.

In the present paper we are interested in how the independence number
behaves under product composition of graphs $G=(V,E)$ and $H=(V',E')$. We will consider
the \emph{strong product} $G\boxtimes H$ and the \emph{disjunctive product} $G\ast H$. 
These two products have as vertex set the Cartesian product $V\times V'$, 
while the corresponding edge sets are defined as follows:
\begin{align*}
  (vv',ww') \in E(G\boxtimes H) &\text{ iff } v=w\ \&\ v'w'\in E' \text{ or }
                                               vw \in E\ \&\ v'=w' \text{ or }
                                               vw \in E\ \&\ v'w'\in E', \\
  (vv',ww') \in E(G\ast H)      &\text{ iff } vw \in E \text{ or } v'w'\in E'.
\end{align*}
The two graph products are related by a de Morgan identity:
$\overline{G\boxtimes H} = \overline{G}\ast\overline{H}$, which is
why they are sometimes called ``and'' ($\boxtimes$) and ``or'' ($\ast$) product.
They exhibit very different behaviour for the independence number:
\[
  \alpha(G \ast H) = \alpha(G)\,\alpha(H),
  \quad\text{ but }\quad
  \alpha(G \boxtimes H) \geq \alpha(G)\,\alpha(H),
\]
and the inequality is in general strict. 
E.g.~for the five-cycle (``pentagon'') $C_5$, we have $\alpha(C_5)=2$
but $\alpha(C_5\boxtimes C_5)=5$.

The independence number and the strong graph product were studied as early as 1956, 
in Shannon's seminal paper on zero-error communication \cite{Shannon1956}, 
in particular the asymptotic behaviour of $\alpha(G^{\boxtimes n}) \sim \Theta(G)^n$, 
where $G^{\boxtimes n} = G \boxtimes G \boxtimes \cdots \boxtimes G$, giving rise to the 
\emph{zero-error (Shannon) capacity} 
\[
  \Theta(G) = \sup_n \bigl( \alpha(G^{\boxtimes n}) \bigr)^{1/n}
\] 
of $G$. 
The strong graph product arises naturally in communication via noisy channels; 
indeed, if $G$ is the confusability graph of a channel, the confusability graph 
of $n$ independent uses of the channel is $G^{\boxtimes n}$.

In his paper, Shannon already introduced a useful
upper bound on $\alpha$ and $\Theta$, which was to become known as the
\emph{fractional packing number} and denoted $\alpha^*$ \cite{Shannon1956}. 
This bound has also been called \emph{Rosenfeld number} in the literature, 
perhaps because its appearance in Shannon's work was not fully appreciated. 
It is defined as
\begin{equation}
  \label{eq:alpha-star}
  \alpha^*(G) = \max \sum_v t_v \text{ s.t. } t_v\geq 0\ \forall v,\ 
                          \sum_{v\in C} t_v \leq 1\ \forall \text{ cliques }C\subset V.
\end{equation}
Here, by a \emph{clique} we mean a complete induced subgraph,
i.e.~$G|_C \simeq K_{m}$, $m=|C|$.
Eq.~(\ref{eq:alpha-star}) is a linear programme (LP), 
and hence efficiently computable once
the cliques are known. To be precise, Shannon had defined it more
generally for hypergraphs (cf.~\cite{Berge1973,ScheinermanUllman1997}), 
which is more natural for an actual communication channel with inputs and 
outputs; the definition above, which is the one whose study Rosenfeld 
initiated \cite{Rosenfeld1967}, is obtained for the hypergraph of all 
cliques of $G$.

In fact, for the clique hypergraph of $G$,
Shannon identified $\alpha^*(G)$ as the zero-error capacity assisted 
by instantaneous feedback of a channel with confusability graph $G$. 
In \cite{CLMW2011}, it was shown that $\alpha^*(G)$ is also the 
zero-error capacity assisted by so-called ``no-signalling'' correlations.
Both result extend to general channels and their hypergraphs,
see~\cite{Shannon1956,CLMW2011} for details.
Shannon furthermore conjectured that $\log \alpha^*(G)$ equals the minimum of the 
usual Shannon capacity over all noisy channels with confusability graph $G$,
which was proved later by Ahlswede \cite{Ahlswede1973};
see also \cite{DuanSeveriniWinter2014} for an alternative proof.
(Note that here the logarithm appears because in information theory
the capacity is measured in bits per channel use, while in zero-error
theory and combinatorics, it is defined via an $n$-th root.) 
All of these imply operational, information theoretic proofs of 
$\alpha(G) \leq \Theta(G) \leq \alpha^*(G)$. However, it can be seen also
in elementary fashion, noticing that restricting the variables in
eq.~(\ref{eq:alpha-star}) to values $\{0,1\}$ yields precisely
the independence number, so $\alpha(G) \leq \alpha^*(G)$. To get the
upper bound on $\Theta(G)$ as well, we use
\[
  \alpha^*(G \boxtimes H) = \alpha^*(G)\,\alpha^*(H),
\]
which follows from the primal and dual LP characterizations of
the fractional packing number (see \ref{app:optimization}).
In particular,
\[
  \alpha(G^{\boxtimes n}) \leq \alpha^*(G^{\boxtimes n}) = \bigl( \alpha^*(G) \bigr)^n,
\]
and the claim follows. For instance, $\alpha^*(C_5) = \frac52$
is an upper bound on $\Theta(C_5)$, but it is not tight.

It took more than twenty years to improve this bound significantly, with the 
discovery of Lov\'{a}sz that a semidefinite programme (SDP)
can emulate many of the nice properties of the fractional packing number:
\begin{equation}
  \label{eq:lovasz}
  \vartheta(G) = \max \tr BJ \text{ s.t. } B\geq 0,\ \tr B=1,\ B_{vw}=0\ \forall vw\in E
\end{equation}
(where $J$ is the all-ones matrix) is also an upper bound on $\alpha(G)$ and is multiplicative:
\[
  \vartheta(G \boxtimes H) = \vartheta(G \ast H) = \vartheta(G)\,\vartheta(H),
\]
hence $\Theta(G) \leq \vartheta(G)$ \cite{Lovasz1979}. Returning to
the pentagon, $\vartheta(C_5) = \sqrt{5} = \Theta(C_5)$.
For a selection of different characerizations of the Lov\'{a}sz number
see \ref{app:optimization}.

\medskip
The rest of the paper is structured as follows. 
In Section~\ref{sec:activation} we show that $\alpha$ and $\vartheta$
can be made asymptotically equal by taking the strong product with suitable
auxiliary graphs.
Then, in Section~\ref{sec:rosenfeld} we recall (and prove) a result
similar in spirit, due to Rosenfeld \cite{Rosenfeld1967} and Hales \cite{Hales1973}, 
which establishes a certain duality between $\alpha$ and $\alpha^*$.
In Section~\ref{sec:theta-plus-minus}, we go on to show 
a similar duality between Schrijver's and Szegedy's variants 
$\vartheta^{\pm}$ of the Lov\'{a}sz number.
Motivated by the \emph{Sandwich Theorem}, Section~\ref{sec:chromatic} 
is devoted to an investigation of analogous questions with the chromatic
number instead of the independence number. Throughout the text,
various remarks offer reflections on our findings and highlight
open problems.
Finally, in Section~\ref{sec:discussion} we conclude, discussing 
what we have learned and speculating on future directions.

\section{Finite and asymptotic activation attaining the Lov\'{a}sz number}
\label{sec:activation}
In general, $\alpha(G)$ is strictly smaller than $\vartheta(G)$
or indeed the integer part of the latter, and this persist even in the
many-copy asymptotics: there are graphs with $\Theta(G) < \vartheta(G)$
\cite{Haemers1979,Peeters1996}.

On the other hand, what we will show in this section is that going beyond 
graph products of the form $G^{\boxtimes n} = G \boxtimes G^{\boxtimes (n-1)}$, 
and considering general products $G\boxtimes H$, closes the gap between 
$\alpha$ and $\vartheta$.
Indeed, Lov\'asz \cite{Lovasz1979} already proved that for vertex-transitive 
$G = (V,E)$, i.e.~when the automorphism group of $G$ maps any vertex to 
any other one,
\[
  \vartheta(G \boxtimes \overline{G})
     = \vartheta(G)\,\vartheta(\overline{G}) 
     = |V|
     = \alpha(G \boxtimes \overline{G}).
\]
This begs the natural question whether for every graph $G$, there exists 
another graph $H$ such that 
\begin{equation}
  \label{eq:activation}
  \alpha(G \boxtimes H) = \vartheta(G \boxtimes H) = \vartheta(G)\,\vartheta(H)\text{?}
\end{equation}

It turns out that by allowing \emph{weighted} graphs $(H,p)$, the answer is yes,
even with $H=\overline{G}$:
\begin{lem}
  \label{lem:alpha-theta}
  For every graph $G$, there exists a weight $p$ on the vertices of the
  complementary graph $H = \overline{G}$, such that
  \[
    \alpha\bigl( G \boxtimes (\overline{G},p) \bigr) 
        = \vartheta\bigl( G \boxtimes (\overline{G},p) \bigr)
        = \vartheta(G)\,\vartheta(\overline{G},p).
  \]
\end{lem}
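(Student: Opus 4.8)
The plan is to name the weight $p$ explicitly and then finish with a one-line sandwich. First I would record the (standard) vertex-weighted analogues of the facts used in the introduction: the weighted sandwich $\alpha(F,w)\le\vartheta(F,w)$; the weighted submultiplicativity $\vartheta\bigl(G\boxtimes(\overline G,p)\bigr)\le\vartheta(G)\,\vartheta(\overline G,p)$, i.e.\ the vertex-weighted version of the upper bound in the multiplicativity identity recalled above, proved in the same way by tensoring orthonormal representations; and the fact that $\vartheta(\overline G,p)=\max\{\langle p,y\rangle:y\in\mathrm{TH}(\overline G)\}$ is the linear optimum of the weight $p$ over the Lov\'asz theta body $\mathrm{TH}(\overline G)$ (see~\ref{app:optimization}). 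Second, I would observe that the ``diagonal'' $D=\{(v,v):v\in V\}$ is independent in $G\boxtimes\overline G$: for $v\ne w$, an edge $(vv,ww)$ would require $vw\in E(G)$ \emph{and} $vw\in E(\overline G)$ simultaneously. Since in $G\boxtimes(\overline G,p)$ the vertex $(v,v')$ carries weight $1\cdot p_{v'}=p_{v'}$, the set $D$ has weight $\sum_v p_v$, so $\alpha\bigl(G\boxtimes(\overline G,p)\bigr)\ge\sum_v p_v$ for \emph{every} weight $p$. The problem thus reduces to finding $p\ge 0$, $p\ne 0$, with $\vartheta(G)\,\vartheta(\overline G,p)\le\sum_v p_v$; after rescaling it suffices to exhibit $p$ in the probability simplex on $V$ with $\vartheta(\overline G,p)\le 1/\vartheta(G)$.

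For this I would invoke the structural fact that $\mathrm{TH}(G)$ and $\mathrm{TH}(\overline G)$ are antiblocking bodies (Lov\'asz~\cite{Lovasz1979}; see~\ref{app:optimization}), so that
\[
  \mathrm{TH}(\overline G)=\bigl\{\,y\ge 0 \ :\ \langle x,y\rangle\le 1 \ \text{for all}\ x\in\mathrm{TH}(G)\,\bigr\}.
\]
Pick $x^{*}\in\mathrm{TH}(G)$ achieving $\vartheta(G)=\max_{x\in\mathrm{TH}(G)}\sum_v x_v$; then $x^{*}\ge 0$ and $\sum_v x^{*}_v=\vartheta(G)>0$, so $p:=x^{*}/\vartheta(G)$ lies in the simplex and, for every $y\in\mathrm{TH}(\overline G)$, $\langle p,y\rangle=\tfrac{1}{\vartheta(G)}\langle x^{*},y\rangle\le\tfrac{1}{\vartheta(G)}$; hence $\vartheta(\overline G,p)\le 1/\vartheta(G)$. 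With this choice of $p$ the chain
\[
  1=\sum_v p_v\ \le\ \alpha\bigl(G\boxtimes(\overline G,p)\bigr)\ \le\ \vartheta\bigl(G\boxtimes(\overline G,p)\bigr)\ \le\ \vartheta(G)\,\vartheta(\overline G,p)\ \le\ \vartheta(G)\cdot\tfrac{1}{\vartheta(G)}=1
\]
forces every inequality to be an equality, which is exactly the statement of the lemma. (For vertex-transitive $G$, symmetry makes $x^{*}$ and hence $p$ uniform, recovering Lov\'asz's identity $\vartheta(G)\vartheta(\overline G)=|V|=\alpha(G\boxtimes\overline G)$.)

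The only genuine ingredient above is the antiblocker identity $\mathrm{TH}(\overline G)=\mathrm{abl}(\mathrm{TH}(G))$ together with $\max_{x\in\mathrm{TH}(G)}\sum_v x_v=\vartheta(G)$; the weighted sandwich and submultiplicativity carry over word for word from the unweighted proofs. I expect the main conceptual hurdle to be the recognition that no \emph{fixed} weight (uniform weights in particular) can succeed in general --- indeed $\vartheta(G)\,\vartheta(\overline G)\ge|V|$ for all $G$, with strict inequality in general when $G$ is not vertex-transitive --- so one must optimize over $p$, and the right $p$ turns out to be the $\ell_1$-maximizer of the theta body of $G$ rescaled into the simplex; once that is guessed, antiblocker duality makes the verification immediate.
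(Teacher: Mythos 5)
Your proof is correct, and its skeleton coincides with the paper's: the diagonal $\{(v,v)\}$ gives $\alpha\bigl(G\boxtimes(\overline G,p)\bigr)\ge\sum_v p_v$, weighted submultiplicativity gives the upper bound $\vartheta(G)\,\vartheta(\overline G,p)$, and everything reduces to exhibiting $p$ with $\vartheta(\overline G,p)\le\bigl(\sum_v p_v\bigr)/\vartheta(G)$. Where you diverge is in how that last inequality is certified. The paper takes the optimal orthonormal representation $\{\ket{\phi_v}\}$ of $\overline G$ with handle $\ket{h}$ realizing $\vartheta(G)=\sum_v|\bra{h}\phi_v\rangle|^2$, sets $p(v)=|\bra{h}\phi_v\rangle|^2$, and plugs that very same OR and handle into Knuth's min--max formula for the weighted $\vartheta(\overline G,p)$ to read off $\vartheta(\overline G,p)\le\max_v p(v)/|\bra{h}\phi_v\rangle|^2=1$. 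You instead take the $\ell_1$-maximizer $x^*$ of $\mathrm{TH}(G)$, normalize, and invoke the antiblocker identity $\mathrm{TH}(\overline G)=\mathrm{abl}(\mathrm{TH}(G))$. Since the extreme points of $\mathrm{TH}(G)$ are exactly the vectors $\bigl(|\bra{h}\phi_v\rangle|^2\bigr)_v$ arising from ORs of $\overline G$ with a handle, your $p$ is the paper's $p$ rescaled by $1/\vartheta(G)$, and the antiblocker identity is precisely a packaged form of the computation the paper carries out by hand; its standard proof (Gr\"otschel--Lov\'asz--Schrijver, also in Knuth's survey) runs through the same umbrella characterization. What your route buys is brevity and a clean geometric picture of \emph{why} the right weight is the theta-body maximizer; what it costs is reliance on a nontrivial black box that is not actually stated in the paper's appendix and is not in the 1979 Lov\'asz paper you point to -- you should cite GLS or Knuth for $\mathrm{TH}(\overline G)=\mathrm{abl}(\mathrm{TH}(G))$ and for $\vartheta(G,w)=\max\{\langle w,y\rangle : y\in\mathrm{TH}(G)\}$. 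Your closing observation that no fixed weight (in particular the uniform one) can work, because $\vartheta(G)\,\vartheta(\overline G)\ge|V|$ with strict inequality for non-vertex-transitive $G$, is correct and a good sanity check.
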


\medskip
Let us briefly recall the definition of weighted graphs 
and their graph invariants. A \emph{weighted graph} $(G,p)$ is a graph $G$ equipped with a
weight function $p:V\to \mathbb{R}_+$. The \emph{weighted independence number} 
$\alpha(G,p)$ is the largest total weight of an independent set in $G$, 
i.e. the largest sum of weights of the elements of an independent set. 
The \emph{weighted fractional packing number} of $(G,p)$ is likewise
\begin{equation}
  \label{eq:alpha-star-w}
  \alpha^*(G,p) = \max \sum_v p(v) t_v \text{ s.t. } t_v\geq 0\ \forall v,\ 
                          \sum_{v\in C} t_v \leq 1\ \forall \text{ cliques }C\subset V.
\end{equation}
Finally, the Lov\'asz number of a weighted graph is defined as 
\begin{align}
  \vartheta(G,p) = \max \tr B\Pi \text{ s.t. } B\geq 0,\ \tr B=1,\ B_{vw}=0\ \forall vw\in E,
\end{align}
where the matrix $\Pi$ has entries $\Pi_{vw} = \sqrt{p(v)p(w)}$;
cf.~the definition for unweighted graphs (\ref{eq:lovasz}). 

Note that for the constant-$1$ weight, $p(v) = 1$ for all $v$, which
we denote as $\mathbf{1}$,
the graph invariants attain the values of their unweighted versions:
\[
  \alpha(G) = \alpha(G,\mathbf{1}),\ 
  \alpha^*(G) = \alpha^*(G,\mathbf{1}),\ 
  \vartheta(G) = \vartheta(G,\mathbf{1}),\ etc.
\]
We will also consider (strong and disjunctive) products of weighted
graphs; their edge sets these are the same as those of the unweighted versions,
while the weights are multiplied pointwise: $(pp')(v,v')=p(v)p'(v')$.

\medskip\noindent
{\bf Dirac (bra-ket) notation.} In the rest of the paper we rely on the
following useful conventional notation for linear algebra,
called \emph{Dirac} or \emph{bra-ket notation} \cite{Dirac1939}: 
In a (real or complex) Hilbert space, the vectors are denoted 
$\ket{\psi}$, $\ket{\phi}$, etc. (``kets''), and the co-vectors -- which
are linear functions on the space -- are $\bra{\psi}$, $\bra{\phi}$, etc.
(``bras''), so that the inner product, denoted $\bra{\phi} \psi \rangle$
is at the same time the application of the co-vector
$\bra{\phi}$ to the vector $\ket{\psi}$, and can also be read as
the ordinary matrix product of the row vector $\bra{\phi}$
with the column vector $\ket{\psi}$. This extends to other
matrix products, such as $\bra{\phi} M \ket{\psi}$ 
for a linear operator/matrix $M$, and to outer products
$\ketbra{\psi}{\phi}$. In particular, the Hilbert space norm
is $\| \ket{\psi} \|_2 = \sqrt{\bra{\psi}\psi\rangle}$, and
for a unit vector $\ket{\psi}$,
$\proj{\psi}$ is the projector onto the line spanned by $\ket{\psi}$.
Note just one difference to usual mathematical convention: The
inner product $\bra{\phi} \psi \rangle$ is linear in the second
argument, and conjugate linear in the first. In practice this difference
will be unsubstantial for us, as the reader may assume real
Euclidean spaces throughout.

\medskip
\begin{proof}[Proof of Lemma~\ref{lem:alpha-theta}]
Let $G=(V,E)$ and let $\{\ket{\phi_v} : v\in V\}$ be an orthonormal representation 
of $\overline{G}$, i.e.~$\bra{\phi_v} \phi_w \rangle = 0$ for all $vw\in E$,
and $\ket{h}$ another unit vector (called the ``handle'' of the OR) such that 
$\vartheta(G) = \sum_{v \in V} |\bra{h} \phi_v \rangle |^2$; this is
another, equivalent characterization of the Lov\'{a}sz number~\cite{Lovasz1979}, 
cf.~\ref{app:optimization}. 
Equip the graph $\overline{G}$ with vertex weights $p(v)=| \bra{h} \phi_v \rangle |^2$.
Since the set $\{ (v,v) : v \in V \}$ is an independent set in $G \boxtimes \overline{G}$, 
it follows that
\begin{equation*}
\alpha(G \boxtimes (\overline{G},p)) \geq \sum_{v \in V} 1 \cdot p(v) 
                                  =    \sum_{v \in V} |\bra{h} \phi_v \rangle |^2 
                                  =    \vartheta(G).
\end{equation*}
Hence, 
\begin{equation}
  \label{chain}
  \vartheta(G) \leq \alpha(G \boxtimes (\overline{G},p))
               \leq \vartheta(G \boxtimes (\overline{G},p)) 
               =    \vartheta(G)\,\vartheta(\overline{G},p).
\end{equation}

On the other hand, the first characterization of the Lov{\'a}sz number 
$\vartheta$ of a weighted graph given in~\cite[Sec.~5]{Knu94} states that
\[
  \vartheta(\overline{G},p) = \min_{ \ket{\tilde{\phi}_v},\ket{\tilde{h}} } 
                                \left( \max_{v\in V} 
                                       \frac{p(v)}{|\bra{\tilde{h}} \tilde{\phi}_v\rangle|^2} \right),
\]
where the minimum is taken over all orthonormal representations and
handles of $\overline{G}$. 
Since $\{\ket{\phi_v} : v\in V \}$ with $\ket{h}$ is one candidate, 
a bound on the Lov{\'a}sz number of $(\overline{G},p)$ is
\begin{equation*}
  \vartheta(\overline{G},p) \leq \max_{v\in V} \frac{p(v)}{|\bra{h} \phi_v\rangle|^2} = 1.
\end{equation*}
Hence, $\vartheta(G)\,\vartheta(\overline{G},p) \leq \vartheta(G)$ and the 
inequalities in (\ref{chain}) turn into equalities, i.e.
\begin{equation}
  \label{equalities}
  \alpha(G \boxtimes (\overline{G},p)) 
                                     = \vartheta(G) \, \vartheta(\overline{G},p)
                                     = \vartheta(G \boxtimes (\overline{G},p)),
\end{equation}
as well as $\vartheta(\overline{G},p) = 1$, concluding the proof.
\end{proof}

\medskip
Now we come to our first main result of this paper; we show that
(\ref{eq:activation}) is attained \emph{asymptotically}.

\begin{theo}
  \label{thm:alpha-theta}
  For every graph $G$, 
  \[
    \sup_H \frac{\alpha(G \boxtimes H)}{\vartheta(G \boxtimes H)} = 1,
     \quad
     \text{or equivalently:}
     \quad
    \sup_H \frac{\alpha(G \boxtimes H)}{\vartheta(H)} = \vartheta(G).
  \]
\end{theo}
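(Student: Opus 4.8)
The plan is to turn Lemma~\ref{lem:alpha-theta} into an asymptotic statement about genuine (unweighted) graphs, by \emph{materialising} the weight $p$ it produces through vertex blow-ups. One inequality costs nothing: for \emph{every} graph $H$, the sandwich property $\alpha\le\vartheta$ together with multiplicativity of $\vartheta$ gives $\alpha(G\boxtimes H)\le\vartheta(G\boxtimes H)=\vartheta(G)\vartheta(H)$, so $\tfrac{\alpha(G\boxtimes H)}{\vartheta(G\boxtimes H)}\le 1$ and $\tfrac{\alpha(G\boxtimes H)}{\vartheta(H)}\le\vartheta(G)$, whence both suprema are at most the claimed values. For the reverse direction I would take, as in the proof of Lemma~\ref{lem:alpha-theta}, an orthonormal representation $\{\ket{\phi_v}:v\in V\}$ of $\overline G$ with a handle $\ket h$ such that $\sum_{v\in V}|\braket{h}{\phi_v}|^2=\vartheta(G)$, and set $p(v)=|\braket{h}{\phi_v}|^2$. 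For a positive integer $N$, let $H_N$ be the blow-up of $\overline G$ in which each vertex $v$ is replaced by a stable set $I_v$ of $q_N(v):=\lfloor Np(v)\rfloor$ vertices, with $I_v$ and $I_w$ completely joined precisely when $vw\in E(\overline G)$ (vertices with $p(v)=0$ simply disappear). Then $H_N$ is an ordinary graph, and the ``diagonal'' set $D=\{(v,x):v\in V,\ x\in I_v\}$ is independent in $G\boxtimes H_N$: distinct elements with equal first coordinate lie in a common stable set $I_v$, while for $(v,x)\ne(w,y)$ with $v\ne w$ adjacency in $G\boxtimes H_N$ would require both $vw\in E$ and $vw\in E(\overline G)$, which is impossible. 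Hence $\alpha(G\boxtimes H_N)\ge|D|=\sum_v\lfloor Np(v)\rfloor\ge N\sum_v p(v)-|V|=N\vartheta(G)-|V|$.

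The heart of the argument is the matching upper bound $\vartheta(H_N)\le N$. I would obtain it by lifting the orthonormal representation of $\overline G$ to one of $H_N$: working in the tensor product of the space of the $\ket{\phi_v}$ with a sufficiently large auxiliary space carrying a unit vector $\ket f$, put $\ket{\psi_{(v,i)}}=\ket{\phi_v}\otimes\ket{g_{v,i}}$, where for each $v$ the vectors $\ket{g_{v,1}},\dots,\ket{g_{v,q_N(v)}}$ are chosen orthonormal with $|\braket{f}{g_{v,i}}|^2=\frac{1}{Np(v)}$ for all $i$ --- which is possible exactly because $q_N(v)\cdot\frac{1}{Np(v)}\le 1$, i.e.\ because we took the \emph{floor}, so that $q_N(v)\le Np(v)$. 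Since $\braket{\phi_v}{\phi_w}=0$ for $vw\in E$ and $\ket{g_{v,i}}\perp\ket{g_{v,j}}$ for $i\ne j$, the vectors $\ket{\psi_{(v,i)}}$ form an orthonormal representation of $H_N$, and with handle $\ket h\otimes\ket f$ one has $|\langle h\otimes f\,|\,\psi_{(v,i)}\rangle|^2=|\braket{h}{\phi_v}|^2\,|\braket{f}{g_{v,i}}|^2=p(v)\cdot\frac{1}{Np(v)}=\frac1N$ for every $(v,i)$, so the orthonormal-representation characterisation of the Lov\'asz number (the one recalled in the proof of Lemma~\ref{lem:alpha-theta}, now applied to $H_N$) gives $\vartheta(H_N)\le N$. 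Alternatively one may invoke the standard identity $\vartheta(H_N)=\vartheta(\overline G,q_N)$ for vertex blow-ups and combine it with homogeneity and monotonicity of $\vartheta(\overline G,\cdot)$ in the weight, together with $\vartheta(\overline G,p)\le 1$ from the proof of Lemma~\ref{lem:alpha-theta}.

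Combining the two estimates, $\tfrac{\alpha(G\boxtimes H_N)}{\vartheta(G\boxtimes H_N)}=\tfrac{\alpha(G\boxtimes H_N)}{\vartheta(G)\vartheta(H_N)}\ge 1-\tfrac{|V|}{N\vartheta(G)}$ and $\tfrac{\alpha(G\boxtimes H_N)}{\vartheta(H_N)}\ge\vartheta(G)-\tfrac{|V|}{N}$, both tending to the respective limits as $N\to\infty$; with the trivial upper bounds from the first paragraph this proves both formulations. I expect the one genuinely delicate step to be $\vartheta(H_N)\le N$: a priori a vertex blow-up could inflate the Lov\'asz number, and the bound succeeds only because flooring the scaled weights keeps $q_N(v)\le Np(v)$, which is exactly what allows the handle's overlap $p(v)$ to be spread over the $q_N(v)$ clones of $v$ without ever exceeding it. The remaining steps --- the diagonal independent set, the arithmetic $\sum_v\lfloor Np(v)\rfloor\ge N\vartheta(G)-|V|$, and the trivial upper bounds --- are routine.
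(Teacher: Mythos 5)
Your proof is correct and follows essentially the same route as the paper's: both start from the weight $p(v)=|\braket{h}{\phi_v}|^2$ coming from Lemma~\ref{lem:alpha-theta} and materialise it by blowing up $\overline G$ with integer-rounded scaled weights, losing only an $O(|V|)$ additive error. The only (harmless) differences are that you round down rather than up, and that you verify $\vartheta(H_N)\le N$ by an explicit tensor-product orthonormal representation of the blow-up instead of citing the blow-up and weight-perturbation lemmas (Lemmas~\ref{incrweight} and~\ref{blup}).
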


\medskip
Before proving this, we recall the definition of blow-up of
an integer-weighted graph, and a couple of auxiliary results from \cite{AFLS}:

\begin{defn}[{cf.~Ac\'{\i}n \emph{et al.}~\cite[Def.~A.2.9]{AFLS}}]
Let $(G,p)$ be a weighted graph with integer weights $p(v)\in\mathbb{N}_{>0}$ 
for all $v\in V$. 
Then the \emph{blow-up} $\mathrm{Blup}(G,p)$ is the unweighted graph with vertex set
\[
  V(p) := \big\{(v,i) \ :\ v\in V,\ i\in \{ 1,\ldots,p(v) \} \big\},
\]
where $(v,i)$ and $(w,j)$ are adjacent in $\mathrm{Blup}(G,p)$
if and only if $vw$ is an edge in $G$.
In other words, each vertex $v$ of $G$ is ``blown up'' to an independent 
set $\overline{K}_{p(v)}$.
\end{defn}

\begin{lem}[{Ac\'{\i}n \emph{et al.}~\cite[Lemma~A.2.7]{AFLS}}]
\label{incrweight}
Let $(G,p)$ be a weighted graph, $q\geq 0$ and 
$X\in\{\alpha,\Theta,\vartheta,\alpha^*\}$. Then,
\begin{equation}
  X(G,p) \leq X(G,r) \leq X(G,p + q\mathbf{1}) \leq X(G,p) + q|V|,
\end{equation}
for any weight $r$ with
$p(v) \leq r(v) \leq p(v)+q$ for all vertices $v$ of $G$.
\qed
\end{lem}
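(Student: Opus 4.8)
The plan is to separate the monotonicity (the two left inequalities) from the additive growth bound (the right inequality), and to route both through the behaviour of each parameter under changing a \emph{single} vertex weight. For all four parameters the weight enters a maximisation through nonnegative coefficients: $\alpha(G,w)=\max_I\sum_{u\in I}w(u)$ over independent sets $I$; $\alpha^*(G,w)=\max_t\sum_u w(u)t_u$ over feasible fractional packings $t$; and, in orthonormal-representation form, $\vartheta(G,w)=\max\sum_u w(u)\,|\bra{h}\phi_u\rangle|^2$ over orthonormal representations $\{\ket{\phi_u}\}$ of $\overline G$ with unit handle $\ket h$ (the weighted analogue of the characterisation used above, cf.~\ref{app:optimization}). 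Each is thus a pointwise supremum of functions that are nondecreasing in $w$, so all three are nondecreasing whenever $w\le w'$ componentwise. For $\Theta$ I would use $\Theta(G,w)=\sup_n\alpha\bigl(G^{\boxtimes n},w^{\boxtimes n}\bigr)^{1/n}$, noting that $w\le w'$ forces $w^{\boxtimes n}\le (w')^{\boxtimes n}$ and invoking the monotonicity of $\alpha$. Applying this to $p\le r$ and to $r\le p+q\mathbf 1$ yields the first two inequalities simultaneously.

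For the additive bound the key step is a single-vertex reduction lemma: for every vertex $v$ and every $0\le q\le w(v)$,
\[
  X(G,w)\ \le\ X(G,w-q e_v)+q ,
\]
where $e_v$ is the indicator weight of $v$. Granting this, I would telescope: starting from $p+q\mathbf 1$ and subtracting $q$ from the vertices one at a time (each intermediate weight staying nonnegative, equal to $p(u)$ or $p(u)+q$, so the hypothesis $q\le w(v)$ holds at the moment of each subtraction), the weight descends to $p$ in $|V|$ steps, each costing at most $q$, giving $X(G,p+q\mathbf 1)\le X(G,p)+q|V|$.

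For $X\in\{\alpha,\alpha^*,\vartheta\}$ the reduction lemma is immediate from the maximisations above: evaluate at an optimiser for $w$ and peel off the $v$-term, which equals $q$ times a coefficient bounded by $1$ — namely $[v\in I]\le 1$ for $\alpha$, the packing value $t_v\le 1$ (forced by the singleton clique $\{v\}$) for $\alpha^*$, and $|\bra{h}\phi_v\rangle|^2\le 1$ (Cauchy--Schwarz, since $\ket h,\ket{\phi_v}$ are unit vectors) for $\vartheta$; the surviving terms form a feasible value for $w-qe_v$ and hence are at most $X(G,w-qe_v)$.

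The main obstacle is $\Theta$, since it is defined asymptotically and the weight multiplies across tensor factors, so no single linear term can be peeled off. Here I would argue at the level of $G^{\boxtimes n}$: writing $w(z_i)=(w-qe_v)(z_i)+q\,[z_i=v]$ and expanding $\prod_{i=1}^n w(z_i)$ over the set $S\subseteq[n]$ of coordinates forced to equal $v$, the $w^{\boxtimes n}$-weight of any independent set $I\subseteq V^n$ becomes $\sum_S q^{|S|}\sum_{z\in I,\ z_i=v\,\forall i\in S}\prod_{i\notin S}(w-qe_v)(z_i)$. For fixed $S$ with $|S|=k$, restricting to those $z\in I$ with $z_i=v$ for $i\in S$ and projecting onto the remaining coordinates is injective and lands in an independent set of $G^{\boxtimes(n-k)}$, so the inner sum is at most $\alpha\bigl(G^{\boxtimes(n-k)},(w-qe_v)^{\boxtimes(n-k)}\bigr)\le\Theta(G,w-qe_v)^{n-k}$, using super-multiplicativity (Fekete) of the weighted independence number under the strong product. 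Summing over $S$ collapses to $\alpha\bigl(G^{\boxtimes n},w^{\boxtimes n}\bigr)\le\bigl(\Theta(G,w-qe_v)+q\bigr)^n$; taking $n$-th roots and the supremum over $n$ gives the reduction lemma for $\Theta$. The delicate points to verify are the injectivity and independence of the coordinate projection for each fixed pattern $S$, and that the binomial bookkeeping telescopes exactly into $(\Theta+q)^n$.
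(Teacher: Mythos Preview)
The paper does not actually prove this lemma: it is quoted verbatim from \cite[Lemma~A.2.7]{AFLS} and closed with a \qed, so there is no in-paper argument to compare against. That said, your proof is correct and self-contained.

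A few comments. Your monotonicity step is sound for all four parameters; the only point that deserves a line of justification (since the paper does not state it explicitly) is the weighted analogue of eq.~(\ref{eq:theta-gram}), namely $\vartheta(G,w)=\max\sum_v w(v)\,|\bra{h}\phi_v\rangle|^2$ over orthonormal representations of $\overline G$ and unit handles. This is standard (it is one of the equivalent formulations in Knuth~\cite{Knu94}), and your example-free use of it is fine, but a one-line pointer to \cite{Knu94} would make the write-up tighter. For $\alpha,\alpha^*,\vartheta$ one can in fact bypass the single-vertex telescoping and argue directly that $X(G,p+q\mathbf{1})\le X(G,p)+q\,X(G,\mathbf{1})\le X(G,p)+q|V|$, simply by splitting the linear objective at an optimiser; your route is slightly longer here but has the virtue of being uniform with the $\Theta$ case.

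Your treatment of $\Theta$ is the interesting part and is correct: the expansion of $\prod_i w(z_i)$ over subsets $S$, together with the observation that fixing the $S$-coordinates to $v$ and projecting to the remaining coordinates is injective and lands in an independent set of $G^{\boxtimes(n-|S|)}$, gives exactly $\alpha\bigl(G^{\boxtimes n},w^{\boxtimes n}\bigr)\le\sum_k\binom{n}{k}q^k\,\Theta(G,w-qe_v)^{\,n-k}=(\Theta(G,w-qe_v)+q)^n$. The appeal to Fekete is not strictly needed (since $\Theta$ is \emph{defined} as the supremum, $\alpha(G^{\boxtimes m},(w')^{\boxtimes m})\le\Theta(G,w')^m$ is immediate), but it does no harm. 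The only boundary case to mention is $k=n$, where the ``projected'' independent set is the singleton in $G^{\boxtimes 0}$ with weight $1$, and the bound reads $q^n\cdot 1$; this is consistent with the binomial identity and with the convention $\Theta^0=1$.
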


\begin{lem}[{Ac\'{\i}n \emph{et al.}~\cite[Lemma A.2.10]{AFLS}}]
For integer vertex weights $p(v) \in \mathbb{N}_{>0}$,
\label{blup}
\begin{enumerate}
\item $\mathrm{Blup}(G_1\boxtimes G_2,p_1\,p_2) = \mathrm{Blup}(G_1,p_1) \boxtimes \mathrm{Blup}(G_2,p_2)$;
\item $X(\mathrm{Blup}(G,p)) = X(G,p)$ for every $X\in\{\alpha,\Theta,\vartheta,\alpha^*\}$. \qed
\end{enumerate}
\end{lem}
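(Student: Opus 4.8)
The plan is to treat the two assertions separately and to prove the graph isomorphism of item~1 first, since the $\Theta$ case of item~2 (and, more mildly, the $\vartheta$ case) will be bootstrapped from it. For item~1 the natural move is to exhibit an explicit bijection of vertex sets and then to verify that it carries edges to edges. Writing each index set $\{1,\ldots,p_1(v_1)p_2(v_2)\}$ as the Cartesian product $\{1,\ldots,p_1(v_1)\}\times\{1,\ldots,p_2(v_2)\}$ yields the canonical map
\[
  \big((v_1,v_2),(i,j)\big) \longmapsto \big((v_1,i),(v_2,j)\big),
\]
which is a bijection between the two vertex sets because both index sets have the same cardinality $p_1(v_1)p_2(v_2)$. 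First I would record the key observation that adjacency in any blow-up depends only on the base vertices and not on the copy indices, so that on both sides the edge relation between two vertices is determined entirely by the underlying pair $(v_1,v_2),(w_1,w_2)$. It then remains to unfold the defining edge conditions of the strong product on each side and to check that they coincide; concretely this reduces to matching up the three clauses $(v=w\ \&\ v'w'\in E')$, $(vw\in E\ \&\ v'=w')$ and $(vw\in E\ \&\ v'w'\in E')$, a finite case analysis in which one must keep careful track of the Lov\'asz convention $v\sim w$ so that copies within a fibre and the diagonal of the product are treated consistently.

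For item~2 I would argue parameter by parameter. For $X=\alpha$ the cleanest route is a saturation argument: any independent set of $\mathrm{Blup}(G,p)$ can be enlarged so that, for each base vertex $v$ it meets, it contains the whole fibre over $v$ (copies of one vertex are mutually non-adjacent), and the set of saturated base vertices must itself be independent in $G$; hence maximal independent sets of the blow-up correspond to independent sets $I$ of $G$ of total weight $\sum_{v\in I}p(v)$, giving $\alpha(\mathrm{Blup}(G,p))=\alpha(G,p)$. For $X=\alpha^*$ I would use the LP~(\ref{eq:alpha-star}): the cliques of $\mathrm{Blup}(G,p)$ are exactly the sets obtained by choosing a clique $C$ of $G$ and one copy of each vertex of $C$, so averaging an optimal LP solution over the permutations of copies inside each fibre lets one assume the solution is constant on fibres, whereupon the blow-up LP collapses to the weighted LP~(\ref{eq:alpha-star-w}) for $(G,p)$. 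For $X=\vartheta$ I would spread an optimum of the weighted problem across copies: starting from an optimal $B$ for $(G,p)$ (equivalently an optimal orthonormal representation and handle), attach to every copy of $v$ the data associated with $v$, rescaled uniformly, and check feasibility and optimality for the unweighted SDP~(\ref{eq:lovasz}) on $\mathrm{Blup}(G,p)$, with the matching lower bound supplied by the orthonormal-representation characterisation. Finally, for $X=\Theta$ I would not argue directly but reduce to $\alpha$ via item~1: iterating item~1 shows that $\mathrm{Blup}(G,p)^{\boxtimes n}$ is again a blow-up of $G^{\boxtimes n}$, with the $n$-fold product weight, so that $\alpha\big(\mathrm{Blup}(G,p)^{\boxtimes n}\big)$ equals the corresponding weighted independence number of $G^{\boxtimes n}$ by the $\alpha$ case; taking $n$-th roots and the supremum over $n$ then yields $\Theta(\mathrm{Blup}(G,p))=\Theta(G,p)$.

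The step I expect to be the main obstacle is the $\vartheta$ case of item~2. Unlike $\alpha$ and $\alpha^*$, whose combinatorial and linear structure makes the uniform-spreading step transparent, for $\vartheta$ one must verify that spreading an optimal semidefinite (or orthonormal-representation) solution uniformly over fibres is simultaneously feasible and value-preserving both in the primal SDP and in the characterisation used for the lower bound, which requires some care with the normalisation $\Pi_{vw}=\sqrt{p(v)p(w)}$. A secondary, more bookkeeping-type difficulty is the adjacency case analysis in item~1 just described. Once item~1 is established, however, the $\Theta$ case is essentially free, which is precisely why I would prove the isomorphism before turning to the parameter identities.
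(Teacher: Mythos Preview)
The paper does not actually prove this lemma: it is quoted verbatim from \cite[Lemma~A.2.10]{AFLS} and closed with a bare \qed, so there is no in-paper argument to compare against. Your proposal is therefore not competing with anything here; it is a reconstruction, and as such it is sound and follows the natural route.

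A few remarks on the details you flag. Your item~1 argument is exactly the standard one and goes through with no surprises. For $\alpha$ and $\alpha^*$ your saturation/symmetrisation plans are correct and complete as stated. For $\Theta$, your reduction via item~1 is the right idea; just make explicit that $\Theta(G,p)$ is defined as $\sup_n \alpha\bigl((G,p)^{\boxtimes n}\bigr)^{1/n}$ with the product weight, which is how the paper implicitly uses it in Lemma~\ref{incrweight}.

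For $\vartheta$ your instinct that this is the delicate case is accurate, and ``attach to every copy of $v$ the same data'' needs one refinement: distinct copies $(v,i)$ and $(v,j)$ are non-adjacent in $\mathrm{Blup}(G,p)$, so an orthonormal representation of the blow-up must assign them orthogonal vectors, not the same one. The clean fix is to work with the primal SDP directly: given an optimal $B$ for $(G,p)$, set $B'_{(v,i)(w,j)} = B_{vw}/\sqrt{p(v)p(w)}$, which is $E(DBD)E^\top$ for the obvious ``copy'' matrix $E$ and diagonal $D$, hence positive semidefinite, and one checks $\tr B'=1$ and $\tr B'J=\tr B\Pi$. The matching upper bound comes from averaging an optimal blow-up solution over the fibre-permutation group $\prod_v S_{p(v)}$ and reading off a feasible $B$ for $(G,p)$; the within-fibre off-diagonal entries require a small extra observation (they can only help the objective, or can be absorbed via the dual characterisation). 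With that adjustment your plan is complete.
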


\medskip
\begin{proof}[{Proof of Theorem~\ref{thm:alpha-theta}}]
For any two graphs $G$ and $H$, Lov\'{a}sz' fundamental inequality is
$\alpha(G\boxtimes H) \leq \vartheta(G\boxtimes H) = \vartheta(G)\,\vartheta(H)$,
so only the achievability of the opposite inequality by a sequence of
graphs $H$ has to be demonstrated.

We use Lemma~\ref{lem:alpha-theta}, giving us a weight $p:V \longrightarrow \RR_{\geq 0}$
such that $\alpha(G \boxtimes (\overline{G},p)) = \vartheta(G \boxtimes (\overline{G},p))$.
Now, consider the sequence of graphs 
$H_\ell := \textrm{Blup}(\overline{G},\lceil \ell\,p \rceil)$;
we claim that indeed, $\alpha(G \boxtimes H_\ell) \sim \vartheta(G \boxtimes H_\ell)$
as required.

To see this, multiply every term in (\ref{equalities}) by
an integer $\ell > 0$. Since the functions $\alpha$ and $\vartheta$ satisfy
$\ell\,X(G,p) = X(G,\ell\,p)$, it follows that
\begin{equation}
  \label{chainl}
  \alpha(G \boxtimes (\overline{G},\ell\,p)) 
                                          = \vartheta(G)\,\vartheta(\overline{G},\ell\,p).
\end{equation}
Now, by Lemma \ref{incrweight},
\begin{equation}
  \label{alpha1}
  \alpha(G \boxtimes (\overline{G},\ell\,p)) \leq \alpha(G \boxtimes (\overline{G},\lceil \ell\,p \rceil)) 
                                          \leq \alpha(G \boxtimes (\overline{G},\ell\,p+1)) 
                                          \leq \alpha(G \boxtimes (\overline{G},\ell\,p)) + |V|^2,
\end{equation}
and similarly
\begin{equation}
  \label{alpha2}
  \vartheta(\overline{G},\ell\,p) \leq \vartheta(\overline{G},\lceil \ell\,p \rceil)
                                  \leq \vartheta(\overline{G},\ell\,p+1)
                                  \leq \vartheta(\overline{G},\ell\,p) + |V|.
\end{equation}
In addition, Lemma \ref{blup} implies that 
\[
  \alpha(G \boxtimes H_\ell) = \alpha(G \boxtimes (\overline{G},\lceil \ell\,p \rceil)),
\]
hence putting this together with eqs.~(\ref{chainl}), (\ref{alpha1}) and (\ref{alpha2})
we get
\[
  \alpha(G \boxtimes H_\ell) \geq \vartheta(G)\,\vartheta(\overline{G},\ell\,p)
                          \geq \vartheta(G)\,\bigl( \vartheta(H_\ell) - |V| \bigr)
                          \geq \vartheta(G)\,\vartheta(H_\ell) - |V|^2.
\]
Since $\vartheta(H_\ell) \rightarrow \infty$ with growing $\ell$,
the claim follows.
\end{proof}

\medskip
\begin{remark}
From the proof, we see that
\[
  \sup_H \frac{a(G \boxtimes H)}{\vartheta(H)} = \vartheta(G),
\]
for any graph parameter $a(G\boxtimes H)$ in the numerator bounded between 
$\alpha(G\boxtimes H)$ and $\vartheta(G\boxtimes H)$, such as $\widetilde{\alpha}(G\boxtimes H)$,
the \emph{entanglement-assisted independence number} \cite{Beigi2010,CMRSSW}, 
Shannon's original zero-error capacity $\Theta(G\boxtimes H)$,
or Schrijver's variant $\vartheta^-(G\boxtimes H)$ of the Lov\'{a}sz number --- see 
Section~\ref{sec:theta-plus-minus} below.

This shows that the only upper bound on $\alpha$ that is (sub-)multiplicative
under strong graph products, and is at least as good as $\vartheta$, is the 
Lov\'{a}sz number itself.

We can also give an information theoretic interpretation of Theorem~\ref{thm:alpha-theta},
based on the recent discovery that $\vartheta(H)$ is precisely the zero-error
capacity assisted by no-signalling correlations, of quantum channels
with confusability graph $H$ \cite{DuanWinter2014}. Hence the quotient
$\frac{\alpha(G\boxtimes H)}{\vartheta(H)}$ is the ratio between how
much we can communicate through $G$ with the aid of some $H$ that we 
``borrow'', and the ``value'' of that other channel.
\end{remark}

\section{Duality of independence number and fractional packing number}
\label{sec:rosenfeld}
Taking inspiration from the second formulation of
Theorem~\ref{thm:alpha-theta}, we might wonder why we should
have the Lov\'{a}sz number in the denominator. Perhaps more
than one reader might object that it would be more natural to
compare like with like, i.e.~$\alpha$ with $\alpha$.

\begin{theo}[{Rosenfeld~\cite{Rosenfeld1967}, Hales~\cite{Hales1973}}]
  \label{thm:rosenfeld}
  For every pair of graphs $G=(V,E)$ and $H=(V',E')$,
  \begin{equation}
    \label{eq:rosenfeld}
    \alpha(G \boxtimes H) \leq \alpha^*(G)\,\alpha(H).
  \end{equation}
  Furthermore, this is tight for every $G$ and $H$ individually:
  Namely, there exist graphs $G'$ and $H'$ such that
  \begin{align}
    \label{eq:rosen-eq-1}
    \alpha(G \boxtimes H') &= \alpha^*(G)\,\alpha(H'), \\
    \label{eq:rosen-eq-2}
    \alpha(G' \boxtimes H) &= \alpha^*(G')\,\alpha(H).
  \end{align}
  In other words, for all graphs $G$,
  \[
    \max_H \frac{\alpha(G \boxtimes H)}{\alpha(H)} = \alpha^*(G),
    \quad
    \max_H \frac{\alpha(G \boxtimes H)}{\alpha^*(H)} = \alpha(G).
  \]
\end{theo}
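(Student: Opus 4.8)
The plan is to establish the inequality first and then construct the extremal graphs $G'$ and $H'$ separately. For the inequality \eqref{eq:rosenfeld}, I would take a maximum independent set $I \subseteq V \times V'$ in $G \boxtimes H$ and project it onto the second coordinate. For each $v' \in V'$ in the image, let $I_{v'} = \{v \in V : (v,v') \in I\}$; the key observation is that whenever $(v,v')$ and $(w,v')$ both lie in $I$ with $v \neq w$, the edge condition in $G \boxtimes H$ together with $v' \sim v'$ forces $vw \notin E$, so each $I_{v'}$ is an independent set in $G$. Moreover, the set $\{v' : I_{v'} \neq \emptyset\}$ is an independent set in $H$: if $v'w' \in E'$ then no $(v,v') \in I$ and $(w,w') \in I$ can coexist. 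Thus $|I| = \sum_{v'} |I_{v'}|$ is a sum of at most $\alpha(H)$ terms, each a size of an independent set of $G$. To get $\alpha^*(G)$ rather than $\alpha(G)$ one uses an LP/fractional averaging argument: defining $t_v = \frac{1}{|J|}\#\{v' \in J : v \in I_{v'}\}$ where $J$ is the projection of $I$ to $V'$, one checks the clique constraints $\sum_{v \in C} t_v \le 1$ hold because $I_{v'}$ meets any clique in at most one vertex, whence $\sum_v t_v \le \alpha^*(G)$ and $|I| = |J| \sum_v t_v \le \alpha(H)\,\alpha^*(G)$.

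For the tightness statements, I would aim to realize $\alpha^*(G)$ combinatorially. The natural choice for \eqref{eq:rosen-eq-1} is to let $H'$ depend on the optimal fractional packing: if $t$ is an optimal rational solution of \eqref{eq:alpha-star} with common denominator $N$, so $t_v = a_v/N$, one wants a graph $H'$ and an assignment of independent sets such that $G \boxtimes H'$ contains an independent set of size $\alpha^*(G)\,\alpha(H')$. A promising construction is to take $H' = \mathrm{Blup}(\overline{G}, a)$ (or a suitable weighted/blow-up version of $\overline{G}$), using the earlier blow-up machinery (Lemma~\ref{blup}): blowing up each vertex $v$ of $\overline{G}$ into $a_v$ copies, the diagonal-type set $\{((v,i),(v,i)) : v \in V,\ i \le a_v\}$ should be independent in $G \boxtimes H'$ and have the right size, while $\alpha(H') = \alpha(\overline{G},a)$ should match so that the ratio equals $\alpha^*(G)$ exactly; one must verify $\alpha(\overline{G},a) = N$ or rescale appropriately. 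For \eqref{eq:rosen-eq-2}, given $H$ one instead picks $G'$: taking $G' = \overline{H}$ suggests itself by the symmetry of the problem, since then $\alpha^*(\overline{H})$ relates to a clique cover / fractional clique quantity of $H$, and the diagonal argument on $\overline{H} \boxtimes H$ gives an independent set of size $|V'|$; the task is to show $|V'| = \alpha^*(\overline{H})\,\alpha(H)$ in the cases where this holds, or more likely to replace $\overline{H}$ by an appropriate blow-up or vertex-transitive modification so equality is forced.

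The main obstacle I anticipate is the tightness direction, specifically getting the \emph{exact} equality $\alpha(G' \boxtimes H) = \alpha^*(G')\,\alpha(H)$ (and its partner) rather than just an asymptotic statement as in Theorem~\ref{thm:alpha-theta}. The asymptotic version would follow painlessly from blow-ups and Lemma~\ref{incrweight}, but the theorem claims equality for each fixed pair, which means the blow-up parameters must be chosen so that no rounding error appears --- this is where rationality of the optimal LP solution and exact control of $\alpha$ of the blow-up (via Lemma~\ref{blup}) become essential, and where I would expect to need a careful choice of $G'$ or $H'$ (e.g.\ a vertex-transitive graph, or a complete-multipartite-type graph built directly from an optimal fractional packing) rather than a generic construction. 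Reconciling the two equations \eqref{eq:rosen-eq-1} and \eqref{eq:rosen-eq-2}, which quantify over different variables, will also require attention: the first fixes $G$ and searches over $H'$, the second fixes $H$ and searches over $G'$, so the constructions are genuinely different and each needs its own verification that the diagonal (or a similarly explicit) independent set is both independent and of maximum possible size.
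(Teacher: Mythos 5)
Your argument for the inequality \eqref{eq:rosenfeld} contains a genuine error: the projection $J$ of a maximum independent set $I \subseteq V \times V'$ onto the second factor is \emph{not} in general an independent set of $H$. Two points $(v,v')$ and $(w,w')$ of $I$ with $v'w' \in E'$ can perfectly well coexist provided $v \neq w$ and $vw \notin E$; the strong product only forbids them when $v \sim_G w$. A concrete counterexample is the five-point independent set in $C_5 \boxtimes C_5$, whose projection onto either factor is all of $V(C_5)$, certainly not independent in $C_5$ --- and indeed your intermediate conclusion $|I| \leq \alpha(G)\,\alpha(H)$ is false there ($5 > 4$). Consequently the final step $|I| = |J|\sum_v t_v \leq \alpha(H)\,\alpha^*(G)$ fails, because $|J| \leq \alpha(H)$ is unavailable; with your normalization by $|J|$ the clique constraints do hold, but you only obtain the useless bound $|I| \leq |J|\,\alpha^*(G) \leq |V'|\,\alpha^*(G)$. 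The repair (Hales' argument, and the paper's) is to normalize by $\alpha(H)$, setting $f(v) = \frac{1}{\alpha(H)}\bigl|(\{v\}\boxtimes H)\cap I\bigr|$, and to verify each clique constraint $\sum_{v\in C} f(v)\leq 1$ by restricting to the slice $I_C = (C\boxtimes H)\cap I$ \emph{first}: because any two vertices of the clique $C$ are equal or adjacent in $G$, the set $I_C$ projects injectively onto a subset of $V'$ that \emph{is} independent in $H$ (an edge in the second coordinate would now force an edge in $G\boxtimes H$), whence $|I_C| \leq \alpha(H)$.

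On the tightness claims, neither part is closed. Equation \eqref{eq:rosen-eq-2} needs no construction depending on $H$ at all: taking $G'$ to be any complete graph gives $\alpha^*(G')=1$ and $\alpha(G'\boxtimes H)=\alpha(H)$, so it is trivial; your route via $G'=\overline{H}$ and vertex-transitive modifications is unnecessary and unproven. For \eqref{eq:rosen-eq-1} your construction $H' = \mathrm{Blup}(\overline{G},n)$ from an optimal rational packing $t_v = n(v)/N$ is exactly the paper's, but the obstacle you flag (needing $\alpha(H')=N$ exactly, or fearing only an asymptotic statement) is a red herring: one only needs $\alpha(H')\leq N$, which holds because the maximal independent sets of $\mathrm{Blup}(\overline{G},n)$ are blow-ups of independent sets of $\overline{G}$, i.e.~of cliques of $G$, so $\alpha(H') = \max_C \sum_{v\in C} n(v) \leq N$ by the clique constraints of the packing. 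The blown-up diagonal then gives $\alpha(G\boxtimes H') \geq \sum_v n(v) = N\,\alpha^*(G) \geq \alpha^*(G)\,\alpha(H')$, and equality follows from \eqref{eq:rosenfeld} with no rounding or limits involved.
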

\begin{proof}
All of this is (implicitly) included in the proof of \cite[Thm.~2]{Rosenfeld1967}.
We rephrase Rosenfeld's proof in our terms, which seems slightly 
more direct to us and is more geared towards our objective.

The first part is identical to Hales' proof of (\ref{eq:rosenfeld}) 
\cite[Thm.~4.2]{Hales1973}. Let $I \subset G\boxtimes H$ be an
independent set of maximum size $\alpha(G\boxtimes H)$. 
Define, for vertices $v\in V$,
\[
  f(v) := \frac{1}{\alpha(H)}\bigl| (\{v\}\boxtimes H) \cap I \bigr|.
\]
We claim that $f$ is a fractional packing of $G$. Indeed, for any
clique $C\subset G$, $I_C := (C\boxtimes H) \cap I$ is an independent set of
$C \boxtimes H$, which means that $I_C$ intersects each $C \boxtimes \{w\}$,
$w\in V'$, in at most one point. Hence,
\[
  J := \{ w : \exists v\in C\ (v,w) \in I \}
\]
is an independent set with $|I_C| = |J| \leq \alpha(H)$, and so
\[
  \sum_{v\in C} f(v) = \frac{1}{\alpha(H)}\bigl| (C\boxtimes H) \cap I \bigr| \leq 1.
\]
But now,
\[
  \alpha^*(G) \geq \sum_{v\in V} f(v) = \frac{1}{\alpha(H)}|I| = \frac{\alpha(G\boxtimes H)}{\alpha(H)},
\]
proving the inequality (\ref{eq:rosenfeld}).

Eq.~(\ref{eq:rosen-eq-2}) is trivial with $G'$ any complete graph.

To prove eq.~(\ref{eq:rosen-eq-1}), consider an optimal fractional
packing of $G$: $f(v) = \frac{n(v)}{N}$, with non-negative integers 
$N$ and $n(v)$; in particular, $\alpha^*(G) = \frac{1}{N}\sum_{v\in V} n(v)$.
[Recall that the fractional packing number is an LP, hence it has an
optimal solution consisting only of rational numbers.]
Now let $H' = \mathrm{Blup}(\overline{G},n)$, which we claim is the graph 
we are looking for. Indeed, 
\[
  \alpha(H') = \max_{{C\subset G \atop \text{clique}}} \sum_{v\in C} n(v) \leq N,
\]
the first identity by the observation that the maximal independent sets are
exactly the blow-ups of independent sets of $\overline{G}$, the second
inequality by the definition of a fractional packing.
On the other hand, because the blown-up diagonal 
$\{(v,(v,\ell)) :\ v\in V,\ 1\leq\ell\leq n(v) \}$
is an independent set in $G\boxtimes H'$, we have
\[
  \alpha(G\boxtimes H') \geq \sum_{v\in V} n(v) = N\,\alpha^*(G) \geq \alpha^*(G)\,\alpha(H').
\]
As we know the opposite inequality already, this concludes the proof.
\end{proof}

\medskip
It may be instructive, or entertaining, to view Theorems~\ref{thm:alpha-theta}
and \ref{thm:rosenfeld} as some kind of tight combinatorial H\"older inequalities:
The expression on the left hand side of 
eq.~(\ref{eq:rosenfeld}), which is a function of the graph product, is 
upper bounded by the product of functions of the factor graphs:
\[
  a(G \boxtimes H) \leq b(G)\,c(H).
\]
If for every graph $G$ ($H$) there exists an $H$ ($G$) making the above
an equality, or an asymptotic equality, we call $b$ and $c$ 
\emph{(asymptotically) dual with respect to $a$}, and the parameter
$a$ the \emph{pivot} of the duality.
Rosenfeld's Theorem~\ref{thm:rosenfeld} shows that $\alpha$ and $\alpha^*$
are dual with respect to $\alpha$, and Theorem~\ref{thm:alpha-theta}
says that $\vartheta$ is asymptotically self-dual with respect to $\alpha$.

We are thus led to consider more general upper bounds
on $\alpha(G \boxtimes H)$ in terms of products $b(G)\,c(H)$,
with special attention to dual pairs. We do not know as of
yet how to characterize all dual pairs for $\alpha$.
However, in the next section we shall show a third example.

\section{Duality of $\vartheta^-$ and $\vartheta^+$ with respect to $\alpha$}
\label{sec:theta-plus-minus}
Schrijver's variant $\vartheta^-$ \cite{Schrijver1979,MRR1978}
and Szegedy's variant $\vartheta^+$ \cite{Szegedy1994} of the Lov\'{a}sz number
are defined as follows:
\begin{align*}
  \vartheta^-(G) &= \max \tr BJ     \text{ s.t. } B\geq 0,\ \tr B=1,\ B_{vw}\geq 0\ \forall v,w,\ 
                                                                B_{vw}=0\ \forall vw\in E ,      \\
  \vartheta^+(G) &= \max \tr BJ     \text{ s.t. } B\geq 0,\ \tr B=1,\ B_{vw}\leq 0\ \forall vw\in E .
\end{align*}
(See~\ref{app:optimization} for equivalent characterizations 
and more properties of these two parameters.)
Then, we have

\begin{lem}
  \label{lem:theta-plus-minus}
  For any two graphs $G=(V,E)$ and $H=(V',E')$,
  \begin{equation}
    \label{eq:alpha-theta-pm-bound}
    \alpha(G\boxtimes H) 
                      \leq \vartheta^-(G \boxtimes H)
                      \leq \vartheta^-(G)\,\vartheta^+(H)
                      \leq \vartheta^+(G \ast H).
  \end{equation}
  In particular, for a graph $G$ on $n$ vertices and its complement $H=\overline{G}$,
  \[
    n \leq \alpha(G\boxtimes\overline{G}) \leq \vartheta^-(G)\,\vartheta^+(\overline{G}),
  \]
  with equality if $G$ is vertex-transitive.
\end{lem}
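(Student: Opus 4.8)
The plan is to prove the four-term chain one link at a time and then read off the ``in particular'' claim by setting $H=\overline{G}$. Two of the three links are straightforward relaxation/tensor arguments with the \emph{primal} SDPs. For $\alpha(G\boxtimes H)\le\vartheta^-(G\boxtimes H)$ I would use that $\vartheta^-$ relaxes $\alpha$: for any independent set $I$ of a graph $K$, the matrix $B$ with $B_{vw}=1/|I|$ if $v,w\in I$ and $B_{vw}=0$ otherwise is positive semidefinite, has trace $1$, has nonnegative entries, and vanishes on every edge of $K$ (since $I$ is independent), so $\vartheta^-(K)\ge\tr BJ=|I|$; apply this to $K=G\boxtimes H$. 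For $\vartheta^-(G)\,\vartheta^+(H)\le\vartheta^+(G\ast H)$ I would take $B$ optimal for $\vartheta^-(G)$ and $C$ optimal for $\vartheta^+(H)$ and check that $B\otimes C$ is feasible for the programme defining $\vartheta^+(G\ast H)$: it is positive semidefinite with trace $1$ and $\tr\bigl((B\otimes C)J\bigr)=\vartheta^-(G)\,\vartheta^+(H)$, while for an edge $(vv',ww')$ of $G\ast H$ one has $vw\in E$ or $v'w'\in E'$; in the first case $B_{vw}=0$, in the second $C_{v'w'}\le 0$ and $B_{vw}\ge 0$ (here is where the sign constraint peculiar to $\vartheta^-$ enters), so in either case $(B\otimes C)_{(vv')(ww')}=B_{vw}C_{v'w'}\le 0$.

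The middle link $\vartheta^-(G\boxtimes H)\le\vartheta^-(G)\,\vartheta^+(H)$ is the real content, and since it is an \emph{upper} bound on a $\vartheta$-invariant of a product I would argue with the \emph{dual} SDPs (cf.~\ref{app:optimization}): $\vartheta^-(K)$ is the least $t$ admitting a positive semidefinite $Z$ with $Z_{uu}=t-1$, $Z_{uw}\le -1$ for every non-adjacent $u\ne w$, and $Z_{uw}$ unconstrained on edges; and $\vartheta^+(K)$ is the least $t$ admitting a positive semidefinite $Z$ with $Z_{uu}=t-1$, $Z_{uw}=-1$ for every non-adjacent $u\ne w$, and $Z_{uw}\ge -1$ on edges. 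Taking dual optimisers $Z_G$ for $\vartheta^-(G)$ and $Z_H$ for $\vartheta^+(H)$, I would form
\[
  Z:=(Z_G+J)\otimes(Z_H+J)-J,
\]
where throughout $J$ is the all-ones matrix of whatever size is appropriate. Set $A:=Z_G+J$ and $B:=Z_H+J$. Then $A$ has diagonal $\vartheta^-(G)$ and off-diagonal entries $\le 0$ on non-edges of $G$ (unconstrained on edges); $B$ has diagonal $\vartheta^+(H)$ and is entrywise $\ge 0$ (its off-diagonal vanishes on non-edges of $H$ and is $\ge 0$ on edges); and $A\succeq J$, $B\succeq J$ because $Z_G,Z_H\succeq 0$. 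A one-line sign check then gives $Z_{(vv')(ww')}\le -1$ whenever $(vv')\not\sim(ww')$ in $G\boxtimes H$, i.e.\ whenever $v\not\sim_G w$ or $v'\not\sim_H w'$: if $v\not\sim_G w$ then $A_{vw}\le 0$ and $B_{v'w'}\ge 0$; if $v\sim_G w$ but $v'\not\sim_H w'$ then $B_{v'w'}=0$; so $(A\otimes B)_{(vv')(ww')}\le 0$ in both cases. Together with $Z_{(vv')(vv')}=\vartheta^-(G)\,\vartheta^+(H)-1$ and $Z\succeq 0$ --- which follows from $A\otimes B\succeq J\otimes J$ via $A\otimes B-C\otimes D=A\otimes(B-D)+(A-C)\otimes D$ applied with $C=D=J$ --- this shows $Z$ is dual-feasible and hence $\vartheta^-(G\boxtimes H)\le\vartheta^-(G)\,\vartheta^+(H)$. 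I expect the delicate part to be precisely this assembly: the construction works only because the factor with a fixed off-diagonal sign is the $\vartheta^+$-certificate for $H$ (entrywise nonnegative after the shift by $J$), which is what matches the $\le 0$ off-diagonal of the $\vartheta^-$-certificate for $G$; this is the structural reason the right-hand side must read $\vartheta^+(H)$ rather than $\vartheta^-(H)$. (One also needs strong duality of these SDPs, which is routine.)

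For the ``in particular'' statement, put $H=\overline{G}$ and $n=|V|$. The diagonal $\{(v,v):v\in V\}$ is independent in $G\boxtimes\overline{G}$, since for $v\ne w$ having $(v,v)\sim(w,w)$ would force both $vw\in E$ and $vw\notin E$; hence $\alpha(G\boxtimes\overline{G})\ge n$, and combining with the first two links gives $n\le\vartheta^-(G)\,\vartheta^+(\overline{G})$. If $G$ is vertex-transitive then so is $\overline{G}$, and on vertex-transitive graphs $\vartheta^-$, $\vartheta$ and $\vartheta^+$ all coincide (a standard fact; see~\ref{app:optimization}); therefore $\vartheta^-(G)\,\vartheta^+(\overline{G})=\vartheta(G)\,\vartheta(\overline{G})=n$ by Lov\'asz's identity for vertex-transitive graphs recalled at the start of Section~\ref{sec:activation}, so the displayed inequality is an equality.
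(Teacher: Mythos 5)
Your proof of the three-link chain (\ref{eq:alpha-theta-pm-bound}) is correct and follows essentially the same route as the paper: the first link by exhibiting the normalised indicator matrix of an independent set as a primal-feasible point for $\vartheta^-$, the third by tensoring primal optimisers, and the middle link by tensoring dual optimisers of (\ref{eq:theta-minus-min}) for $G$ and (\ref{eq:theta-plus-min}) for $H$. Your shifted normalisation ($Z\mapsto Z-J$) is only cosmetically different from the paper's direct check that $Y\otimes Z\geq J\otimes J$ is dual feasible for $\vartheta^-(G\boxtimes H)$, and your remark about which factor must carry the entrywise-nonnegative certificate is exactly the right structural point.

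The gap is in the vertex-transitive equality. You derive it from the claim that $\vartheta^-$, $\vartheta$ and $\vartheta^+$ all coincide on vertex-transitive graphs, calling this a standard fact. It is not: Schrijver's very motivation for introducing $\vartheta^-$ was that for distance graphs of the binary Hamming scheme --- Cayley graphs on $\mathbb{Z}_2^n$, hence vertex-transitive --- the bound $\vartheta^-$ can be strictly smaller than $\vartheta$ \cite{Schrijver1979,MRR1978}. What is true, and what the paper invokes, is Szegedy's theorem that $\vartheta^-(G)\,\vartheta^+(\overline{G})=n$ for vertex-transitive $G$ \cite{Szegedy1994}; its proof proceeds by averaging an optimal solution over the automorphism group (as in Lov\'asz's proof of $\vartheta(G)\,\vartheta(\overline{G})=n$), not by reducing $\vartheta^{\pm}$ to $\vartheta$. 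Note that the two product identities $\vartheta(G)\,\vartheta(\overline{G})=n$ and $\vartheta^-(G)\,\vartheta^+(\overline{G})=n$ are perfectly consistent with the individual factors differing, since $\vartheta^-\leq\vartheta\leq\vartheta^+$ allows the deviations to cancel in the product. So for the equality case you should either cite Szegedy directly or supply the symmetrisation argument; as written, that part of your proof does not stand.
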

\begin{proof}
Schrijver and McEliece \emph{et al.} proved $\alpha \leq \vartheta^-$ 
\cite{Schrijver1979,MRR1978}.
The second and third inequality are proved via the primal and dual 
SDP characterizations of $\vartheta^{\pm}$.

\medskip
\underline{$\vartheta^-(G)\,\vartheta^+(H) \leq \vartheta^+(G \ast H)$}:
We use the primal SDPs given above, according to which we choose
feasible $B\geq 0$, $\tr B=1$ and $C\geq 0$, $\tr C=1$ for 
$\vartheta^-(G)$ and $\vartheta^+(H)$, respectively: 
$B_{vw} \geq 0$ for all $v,w$ and $B_{vw} = 0$ for $vw \in E$, and
$C_{v'w'} \leq 0$ for all $v'w' \in E'$. Then it is straightforward to
check that $B\ox C \geq 0$ is feasible for $\vartheta^+(G\ast H)$.

\medskip
\underline{$\vartheta^-(G \boxtimes H) \leq \vartheta^-(G)\,\vartheta^+(H)$}:
We use the dual SDP formulations of $\vartheta^{\pm}$, \ref{app:optimization},
eqs.~(\ref{eq:theta-minus-min}) and (\ref{eq:theta-plus-min}), according to
which we choose dual feasible $\lambda$ and $Y \geq J$ for $G$ and dual feasible
$\mu$ and $Z\geq J$ for $H$: $Y_{vv}=\lambda$ and $Z_{v'v'}=\mu$ for all $v$ and $v'$,
$Y_{vw} \leq 0$ for all $v\not\sim_G w$, $Z_{v'w'} \geq 0$ for all $v',w'$,
and $Z_{v'w'}=0$ for all $v'\not\sim_H w'$. It is straightforward to check that
the pair $\lambda\mu$ and $Y\ox Z \geq J\ox J$ is dual feasible for $\vartheta^-(G\boxtimes H)$.

\medskip
The case of $H=\overline{G}$ follows from $\alpha(G\boxtimes\overline{G}) \geq n$,
and is originally due to Szegedy \cite{Szegedy1994}, who also proved the equality 
in the vertex-transitive case.
\end{proof}

\medskip
\begin{remark}
Whereas $\vartheta$ is know to be multiplicative under both
the strong and the disjunctive product, this carries over
to $\vartheta^{\pm}$ only partially. Namely, it holds that
\begin{align*}
  \vartheta^-(G \boxtimes H) &\geq \vartheta^-(G)\,\vartheta^-(H), \\
  \vartheta^+(G \ast H)      &\leq \vartheta^+(G)\,\vartheta^+(H),
\end{align*}
but both inequalities can be strict, see~\cite[App.~A]{CMRSSW} for
explicit examples.

On the other hand, it is known that $\vartheta^-(G \ast H) = \vartheta^-(G)\,\vartheta^-(H)$
for all $G$ and $H$,
while the analogous $\vartheta^+(G \boxtimes H) = \vartheta^+(G)\,\vartheta^+(H)$
has been proven only for vertex-transitive $G$ and $H$, 
but is conjectured in general \cite[App.~A]{CMRSSW}.
\end{remark}

\medskip
The last part of Lemma~\ref{lem:theta-plus-minus} suggests the same 
question as for the Lov\'{a}sz number in Section~\ref{sec:activation}: 
Does there always exist a graph $H$, depending on $G$, such
that $\alpha(G\boxtimes H) = \vartheta^-(G)\,\vartheta^+(H)$? While we cannot
answer this question, we show that the answer is yes in an asymptotic
sense, building on a weighted analogue as before.

\begin{theo}
  \label{thm:alpha-theta-plus-minus}
  For every graph $G = (V,E)$, 
  \begin{align}
    \label{eq:alpha-theta-plus-minus}
    \sup_H \frac{\alpha(G \boxtimes H)}{\vartheta^+(H)} &=    \vartheta^-(G), \\
    \label{eq:alpha-theta-minus-plus}
    \sup_H \frac{\alpha(G \boxtimes H)}{\vartheta^-(H)} &=    \vartheta^+(G).
  \end{align}
\end{theo}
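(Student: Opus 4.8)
The plan is to mirror the argument that proved Theorem~\ref{thm:alpha-theta}, replacing the single self-dual Lov\'asz number by the dual pair $(\vartheta^-,\vartheta^+)$. First I would establish a weighted analogue of Lemma~\ref{lem:alpha-theta}: for every graph $G$ there is a weight $p$ on $V(\overline G)$ such that
\[
  \alpha\bigl(G\boxtimes(\overline G,p)\bigr)
     = \vartheta^-(G)\,\vartheta^+(\overline G,p),
\]
together with the side information $\vartheta^+(\overline G,p)=1$ (and, symmetrically, a weight $p'$ with $\alpha(G\boxtimes(\overline G,p'))=\vartheta^+(G)\,\vartheta^-(\overline G,p')$ and $\vartheta^-(\overline G,p')=1$). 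To get this I would take an optimal dual solution to the SDP for $\vartheta^-(G)$ --- in orthonormal-representation language, an o.r.\ $\{\ket{\phi_v}\}$ of $\overline G$ with nonnegative Gram entries and a handle $\ket h$ achieving $\vartheta^-(G)=\sum_v|\braket{h}{\phi_v}|^2$ --- and set $p(v)=|\braket{h}{\phi_v}|^2$. The diagonal $\{(v,v)\}$ is independent in $G\boxtimes\overline G$, giving $\alpha(G\boxtimes(\overline G,p))\ge\sum_v p(v)=\vartheta^-(G)$. Lemma~\ref{lem:theta-plus-minus} in weighted form supplies $\alpha(G\boxtimes(\overline G,p))\le\vartheta^-(G\boxtimes(\overline G,p))\le\vartheta^-(G)\,\vartheta^+(\overline G,p)$. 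For the reverse bound I would use the appropriate ``$\min$'' characterization of $\vartheta^+$ of a weighted graph (the analogue of the Knuth formula used in the proof of Lemma~\ref{lem:alpha-theta}, but for $\vartheta^+$), for which the same representation $\{\ket{\phi_v}\},\ket h$ is feasible because its Gram matrix is entrywise nonnegative, yielding $\vartheta^+(\overline G,p)\le\max_v p(v)/|\braket{h}{\phi_v}|^2=1$; the chain of inequalities then collapses to the claimed equalities. The dual statement with $\vartheta^-$ and $\vartheta^+$ swapped follows by running the same argument starting from $\vartheta^+(G)$.

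Next I would scale the weight: multiplying the weighted identity through by an integer $\ell$ and using $\ell\,X(\overline G,p)=X(\overline G,\ell p)$ for $X\in\{\alpha,\vartheta^-,\vartheta^+\}$ gives $\alpha(G\boxtimes(\overline G,\ell p))=\vartheta^-(G)\,\vartheta^+(\overline G,\ell p)$. Then I would pass to the blow-up $H_\ell:=\mathrm{Blup}(\overline G,\lceil\ell p\rceil)$, controlling the rounding error exactly as in the proof of Theorem~\ref{thm:alpha-theta}: a Lemma~\ref{incrweight}-type sandwich (in the variants valid for $\vartheta^\pm$) bounds $\alpha(G\boxtimes(\overline G,\lceil\ell p\rceil))$ and $\vartheta^+(\overline G,\lceil\ell p\rceil)$ within an additive $O(|V|^2)$ and $O(|V|)$ of their $\ell p$ counterparts, and a Lemma~\ref{blup}-type identity gives $\alpha(G\boxtimes H_\ell)=\alpha(G\boxtimes(\overline G,\lceil\ell p\rceil))$ and $\vartheta^+(H_\ell)=\vartheta^+(\overline G,\lceil\ell p\rceil)$. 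Combining,
\[
  \alpha(G\boxtimes H_\ell)\ \ge\ \vartheta^-(G)\,\vartheta^+(\overline G,\ell p)\ \ge\ \vartheta^-(G)\,\vartheta^+(H_\ell)-|V|^2 ,
\]
so, dividing by $\vartheta^+(H_\ell)\to\infty$, $\sup_H \alpha(G\boxtimes H)/\vartheta^+(H)\ge\vartheta^-(G)$; the opposite inequality is just the first two inequalities of \eqref{eq:alpha-theta-pm-bound}. Equation~\eqref{eq:alpha-theta-minus-plus} is obtained verbatim from the dual weighted identity, with $H_\ell=\mathrm{Blup}(\overline G,\lceil\ell p'\rceil)$ and the roles of $\vartheta^-$ and $\vartheta^+$ interchanged.

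The main obstacle I anticipate is purely bookkeeping about which auxiliary facts transfer from $\vartheta$ to $\vartheta^\pm$: specifically, (i) that there is a ``$\min$'' / orthonormal-representation characterization of $\vartheta^+$ of a \emph{weighted} graph for which an o.r.\ with entrywise-nonnegative Gram matrix is feasible --- this is what lets the $\vartheta^+$ side be pinned down by the $\vartheta^-$-optimal representation --- and (ii) that the monotonicity/additivity lemmas (\ref{incrweight}) and the blow-up lemma (\ref{blup}) remain valid with $X\in\{\alpha,\vartheta^-,\vartheta^+\}$. Both are expected from the SDP structure (these parameters are still sandwiched between $\alpha$ and $\alpha^*$, behave linearly under rescaling of weights, and are invariant under blow-up), so I would either cite the relevant statements from \ref{app:optimization} and \cite{AFLS,CMRSSW} or note the one-line SDP verifications; everything else is a line-by-line transcription of the proofs of Lemma~\ref{lem:alpha-theta} and Theorem~\ref{thm:alpha-theta}.
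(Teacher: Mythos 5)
Your proposal is correct and follows essentially the same route as the paper: the paper likewise first proves the weighted identity $\alpha(G\boxtimes(\overline G,p))=\vartheta^-(G)\,\vartheta^+(\overline G,p)$ (its Lemma~\ref{lem:alpha-theta-pm}) by taking a non-negative orthonormal representation with consistent handle achieving $\vartheta^-(G)$, setting $p(v)=|\braket{h}{\phi_v}|^2$, using the diagonal independent set and the feasibility of that representation for the weighted $\vartheta^+$ of $\overline G$ to force $\vartheta^+(\overline G,p)\le 1$, and then passes to $H_\ell=\mathrm{Blup}(\overline G,\lceil\ell p\rceil)$ exactly as in Theorem~\ref{thm:alpha-theta}, with the symmetric argument for the $(\vartheta^+,\vartheta^-)$ case. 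Your explicit rounding estimates and the noted need to extend Lemmas~\ref{incrweight} and~\ref{blup} to $\vartheta^\pm$ match what the paper asserts (more tersely) in its own proof.
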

\begin{proof}
From Lemma~\ref{lem:theta-plus-minus} we know
\[
  \alpha(G\boxtimes H) \leq \vartheta^-(G)\,\vartheta^+(H),
\]
hence the inequality ``$\leq$'' in both eqs. (\ref{eq:alpha-theta-plus-minus})
and (\ref{eq:alpha-theta-minus-plus}) follows. In the vertex-transitive case
we have
\[
  \alpha(G\boxtimes\overline{G}) = |V| = \vartheta^-(G)\,\vartheta^+(\overline{G})
                                       = \vartheta^+(G)\,\vartheta^-(\overline{G}).
\]

The general proof of ``$\geq$'' in eq.~(\ref{eq:alpha-theta-plus-minus})
is similar to Theorem~\ref{thm:alpha-theta}: By Lemma~\ref{lem:alpha-theta-pm}
below, there exists a weight $p:V\longrightarrow \RR_{\geq 0}$ such that
\[
  \alpha(G \boxtimes (\overline{G},p)) = \vartheta^-(G)\,\vartheta^+(\overline{G},p).
\]
Now, letting $H_\ell = \textrm{Blup}(\overline{G},\lceil \ell\,p \rceil)$
does the trick, observing that Lemmas~\ref{incrweight} and \ref{blup}
extend to $\vartheta^{\pm}$. 

Analogously, to prove eq.~(\ref{eq:alpha-theta-minus-plus}), we use
Lemma~\ref{lem:alpha-theta-pm} below once more, showing that there exists a 
weight $q:V\longrightarrow \RR_{\geq 0}$ such that
\[
  \alpha(G \boxtimes (\overline{G},q)) = \vartheta^+(G)\,\vartheta^-(\overline{G},q).
\]
As before, letting $H_\ell = \textrm{Blup}(\overline{G},\lceil \ell\,q \rceil)$
does what we need, observing that Lemmas~\ref{incrweight} and \ref{blup}
extend to $\vartheta^{\pm}$.
\end{proof}

\begin{lem}
  \label{lem:alpha-theta-pm}
  For every graph $G$, there exists a weight $p$ on the vertices of the
  complementary graph $H = \overline{G}$, such that
  \[
    \alpha\bigl( G \boxtimes (\overline{G},p) \bigr) 
        = \vartheta^-(G)\,\vartheta^+(\overline{G},p).
  \]
  
  There also exists a weight $q$ on $H = \overline{G}$, such that
  \[
    \alpha\bigl( G \boxtimes (\overline{G},q) \bigr) 
        = \vartheta^+(G)\,\vartheta^-(\overline{G},q).
  \]
\end{lem}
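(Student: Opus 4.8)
The plan is to mimic the proof of Lemma~\ref{lem:alpha-theta}, replacing the characterization of $\vartheta$ by the appropriate orthonormal-representation characterizations of $\vartheta^-$ and $\vartheta^+$. Recall (cf.~\ref{app:optimization}) that $\vartheta^+$ admits an ``orthonormal representation'' description in which one optimizes over \emph{faithful} (or sign-constrained) representations: there is a representation $\{\ket{\phi_v}\}$ of $\overline{G}$ together with a handle $\ket{h}$, meeting the extra sign conditions built into $\vartheta^+$, such that $\vartheta^+(G) = \sum_{v\in V} |\bra{h}\phi_v\rangle|^2$; and dually $\vartheta^-(\overline{G},p)$ has a min-formulation $\vartheta^-(\overline G,p) = \min \max_{v} p(v)/|\bra{\tilde h}\tilde\phi_v\rangle|^2$ over the corresponding class of representations of $\overline G$. (For the second statement of the lemma the roles of $\vartheta^-$ and $\vartheta^+$ are interchanged.) So first I would pin down exactly which primal/dual pair of characterizations from the appendix to invoke, so that the representation achieving the ``+'' quantity on $G$ is simultaneously \emph{feasible} for the ``$-$'' min-formulation on $\overline G$.

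Second, given such a representation $\{\ket{\phi_v}\}$ with handle $\ket h$ for the relevant variant of the Lov\'asz number of $G$, I would set $p(v) := |\bra{h}\phi_v\rangle|^2$ exactly as in Lemma~\ref{lem:alpha-theta}. The diagonal $\{(v,v):v\in V\}$ is still an independent set in $G\boxtimes(\overline G,p)$, giving
\[
  \alpha\bigl(G\boxtimes(\overline G,p)\bigr) \geq \sum_{v\in V} p(v) = \vartheta^-(G).
\]
Combining this with the upper bound from Lemma~\ref{lem:theta-plus-minus}, namely $\alpha(G\boxtimes(\overline G,p)) \leq \vartheta^-(G)\,\vartheta^+(\overline G,p)$, yields the chain
\[
  \vartheta^-(G) \leq \alpha\bigl(G\boxtimes(\overline G,p)\bigr) \leq \vartheta^-(G)\,\vartheta^+(\overline G,p),
\]
so it suffices to show $\vartheta^+(\overline G,p) \leq 1$. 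This is where the min-characterization enters: plugging the candidate $\{\ket{\phi_v}\}$, $\ket h$ into the min-formula for $\vartheta^+(\overline G,p)$ gives $\vartheta^+(\overline G,p) \leq \max_v p(v)/|\bra h\phi_v\rangle|^2 = 1$ by the very definition of $p$. Hence all inequalities collapse to equalities and $\alpha(G\boxtimes(\overline G,p)) = \vartheta^-(G)\,\vartheta^+(\overline G,p)$, with $\vartheta^+(\overline G,p)=1$. The second assertion is obtained by running the identical argument with $\vartheta^+(G)$ in place of $\vartheta^-(G)$, using the dual pair of characterizations with the sign conditions swapped, and invoking the corresponding inequality $\alpha(G\boxtimes H)\leq \vartheta^+(G)\,\vartheta^-(H)$ (the mirror of Lemma~\ref{lem:theta-plus-minus}, obtained by interchanging the roles of the strong and disjunctive products, or equivalently of $G$ and $\overline G$).

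The main obstacle I anticipate is purely bookkeeping: making sure the orthonormal-representation (and handle) characterizations of $\vartheta^+$ and $\vartheta^-$ quoted from \ref{app:optimization} are stated with \emph{compatible} feasibility conditions, so that one optimal representation for the ``max'' side of one variant is genuinely admissible in the ``min'' side of the other. Concretely, $\vartheta^+$'s primal SDP imposes $B_{vw}\le 0$ on edges, and one must translate this into the correct sign/positivity constraint on the Gram vectors $\{\ket{\phi_v}\}$ and check that exactly this constraint is what the dual (min) description of $\vartheta^-$ ranges over. Once the two descriptions are aligned, everything else is a verbatim repeat of the Lemma~\ref{lem:alpha-theta} argument; the substance of the lemma is the existence of the appropriate self-dual pair of characterizations, not any new computation.
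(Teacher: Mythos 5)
Your proposal follows essentially the same route as the paper's proof: take the weight $p(v)=|\bra{h}\phi_v\rangle|^2$ from a representation achieving the Gram-form characterization of $\vartheta^-(G)$ (resp.\ $\vartheta^+(G)$), use the diagonal independent set for the lower bound, the weighted Lemma~\ref{lem:theta-plus-minus} for the upper bound, and the min-characterization to conclude $\vartheta^+(\overline G,p)\leq 1$ (resp.\ $\vartheta^-(\overline G,q)\leq 1$). The compatibility of feasibility conditions that you flag as the main obstacle does work out exactly as you hope: the non-negative orthonormal representation of $\overline G$ achieving eq.~(\ref{eq:theta-minus-gram}) is feasible for eq.~(\ref{eq:theta-plus-umbrella}) applied to $\overline G$, and the obtuse representation of $\overline G$ achieving eq.~(\ref{eq:theta-plus-gram}) is feasible for eq.~(\ref{eq:theta-minus-umbrella}) applied to $\overline G$, which is precisely how the paper closes the argument.
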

\begin{proof}
As one might expect, this goes very similar to the proof of
Lemma~\ref{lem:alpha-theta}, using the characterizations of $\vartheta^{\pm}$
in \ref{app:optimization}.

For the first identity,
according to eq.~(\ref{eq:theta-minus-gram}),
we can find a non-negative orthonormal representation $\ket{\phi_v}$
of $\overline{G}$ (meaning that $\bra{\phi_v}\phi_w\rangle \geq 0$ for
all vertices $v,w$)
and a consistent unit vector $\ket{h}$
(meaning that $\bra{h}\phi_v\rangle \geq 0$ for all $v$), 
such that $\vartheta^-(G) = \sum_{v \in V} |\bra{h} \phi_v \rangle |^2$.
On the other hand, this non-negative OR is feasible for $\vartheta^+$
of the complementary graph, according to eq.~(\ref{eq:theta-plus-umbrella}),
and its weighted analogue. Hence, with $p(v) = |\bra{h} \phi_v \rangle |^2$,
we have $\vartheta^+(\overline{G},p) \leq 1$.
Now, as in the proof of Lemma~\ref{lem:alpha-theta}, the diagonal 
$\{(v,v) : v\in V \}$ is an independent set in $G\boxtimes (\overline{G},p)$,
with weight
\[\begin{split}
  \vartheta^-(G) =     \sum_{v \in V} |\bra{h} \phi_v \rangle |^2
                 &\leq \alpha\bigl(G\boxtimes (\overline{G},p)\bigr)       \\
                 &\leq \vartheta^-\bigl(G\boxtimes (\overline{G},p)\bigr)  \\
                 &\leq \vartheta^-(G)\,\vartheta^+(\overline{G},p)
                  \leq \vartheta^-(G),
\end{split}\]
where we have used the weighted version of Lemma~\ref{lem:theta-plus-minus}, 
and hence all of the above inequalities are identities.

For the second identity, we proceed very similarly. Indeed,
according to eq.~(\ref{eq:theta-plus-gram}), we can find an obtuse representation 
$\ket{\phi_v'}$ of $\overline{G}$ (meaning $\bra{\phi_v'}\phi_w'\rangle \leq 0$
for all edges $vw\in E$) and a consistent unit vector $\ket{h'}$, 
such that $\vartheta^+(G) = \sum_{v \in V} |\bra{h'} \phi_v' \rangle |^2$.
At the same time, this obtuse representation is feasible for
$\vartheta^-$ of the complementary graph, according to eq.~(\ref{eq:theta-minus-umbrella}),
and its weighted analogue. Hence, with $q(v) = |\bra{h'} \phi_v' \rangle |^2$,
we have $\vartheta^-(\overline{G},q) \leq 1$.
Now, as before, the diagonal $\{(v,v) : v\in V \}$ is an independent set 
in $G\boxtimes (\overline{G},q)$, with weight
\[\begin{split}
  \vartheta^+(G) =     \sum_{v \in V} |\bra{h'} \phi_v' \rangle |^2
                 &\leq \alpha\bigl(G\boxtimes (\overline{G},q)\bigr)       \\
                 &\leq \vartheta^-\bigl(G\boxtimes (\overline{G},q)\bigr)  \\
                 &\leq \vartheta^+(G)\,\vartheta^-(\overline{G},q)
                  \leq \vartheta^+(G),
\end{split}\]
where we have used the weighted version of Lemma~\ref{lem:theta-plus-minus}, 
and hence all of the above inequalities are identities.
\end{proof}

\section{Analogues for the chromatic number as pivot}
\label{sec:chromatic}
By the celebrated Sandwich Theorem, cf.~\cite{Knu94},
\[
  \alpha(G) \leq \vartheta(G) \leq \chi(\overline{G}) = \sigma(G),
\]
where $\chi$ is the chromatic number and $\sigma$ 
the \emph{clique covering number} of the graph $G$: 
$\chi(\overline{G}) = \sigma(G)$,
because each valid colouring of a graph is a partitioning,
or more generally covering, of its vertex sets by independent
sets, which are precisely the cliques in the complementary
graph. To avoid the awkward complements [observe 
$\overline{G\boxtimes H} = \overline{G}\ast\overline{H}$, so
we have $\chi(G \ast H) = \sigma(\overline{G} \boxtimes \overline{H})$
and $\chi(G \boxtimes H) = \sigma(\overline{G} \ast \overline{H})$], 
we will primarily present the following results in terms of 
the clique covering number, even though they may be better known 
or more attractive in their ``chromatic'' guise.

For all the other quantities introduced so far, there is a
veritable ``francesinha'':
\[
  \alpha(G) \leq \widetilde{\alpha}(G) \leq \vartheta^-(G)
            \leq \vartheta(G) \leq \vartheta^+(G) 
            \leq \alpha^*(G) \leq \sigma(G) = \chi(\overline{G}).
\]
For the clique covering/chromatic number, both strong and disjunctive 
product yield interesting asymptotics;
McEliece and Posner solved it for $\sigma(G^{\boxtimes n})$ \cite{McEliecePosner1971},
and Witsenhausen initiated the study of $\sigma(G^{\ast n})$ \cite{Witsenhausen1976}.

\medskip
We start with the strong graph product, for which the older 
literature offers a duality between clique covering number and
fractional packing/covering number:

\begin{theo}[Cf.~Hales~\cite{Hales1973}, McEliece/Posner~\cite{McEliecePosner1971}]
  \label{thm:chromatic-rosenfeld}
  For every pair of graphs $G=(V,E)$ and $H=(V',E')$,
  \begin{equation}
    \label{eq:hales}
    \sigma(G \boxtimes H) \geq \alpha^*(G)\,\sigma(H),
    \quad\text{i.e.}\quad
    \chi(G \ast H) \geq \alpha^*(\overline{G})\,\chi(H).
  \end{equation}
  Furthermore, this is (asymptotically) tight for every $G$ and $H$ individually.
  Namely, for all graphs $G$,
  \begin{align*}
    \inf_H \frac{\sigma(G \boxtimes H)}{\sigma(H)}   = \alpha^*(G),
    \quad&\text{i.e.}\quad
    \inf_H \frac{\chi(G \ast H)}{\chi(H)} = \alpha^*(\overline{G}), \\
    \min_H \frac{\sigma(G \boxtimes H)}{\alpha^*(H)} = \sigma(G),
    \quad&\text{i.e.}\quad
    \min_H \frac{\chi(G \ast H)}{\alpha^*(\overline{H})} = \chi(G).
  \end{align*}
\end{theo}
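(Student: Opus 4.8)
The plan is to mirror the structure of Rosenfeld's Theorem~\ref{thm:rosenfeld}, but with all inequalities reversed and with ``packing'' replaced by ``covering''. First I would establish the inequality~(\ref{eq:hales}). Given a minimum clique cover of $G\boxtimes H$ into $\sigma(G\boxtimes H)$ cliques, I want to extract from it a fractional \emph{covering} of $G$ (the LP-dual object to $\alpha^*$, which by LP duality has the same value) that is efficient relative to $\sigma(H)$. The natural move is to fix an optimal clique cover $\{Q_1,\dots,Q_{\sigma(H)}\}$ of $H$ and, for each clique $K$ in the cover of $G\boxtimes H$, look at its projections. A clique $K$ in $G\boxtimes H$ projects to a clique in $G$ (its first coordinates) and to a clique in $H$; intersecting with the ``slab'' $G\boxtimes Q_j$ lets me count how the cover of $G\boxtimes H$ covers the product, and averaging over the $\sigma(H)$ slabs should yield, for each vertex $v\in V$, a weight measuring how many cliques of the cover hit the fibre $\{v\}\boxtimes H$, normalized by $\sigma(H)$. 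The constraint that this is a fractional clique cover of $G$ (i.e.\ every vertex gets total weight $\geq 1$) comes from the fact that the cliques of $G\boxtimes H$ must cover every vertex $(v,w)$, and restricting to an independent set $J$ of $H$ of size $\alpha(H)$ — dually, to the clique structure — forces enough cliques through each fibre. This gives $\sigma^*(G)\le \sigma(G\boxtimes H)/\sigma(H)$, and since $\sigma^*(G)=\alpha^*(G)$ by LP duality, we are done with~(\ref{eq:hales}). The chromatic reformulation is then immediate from the de~Morgan identity $\overline{G\boxtimes H}=\overline G\ast\overline H$ already recorded in the text.

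Next I would prove the two tightness statements. For $\min_H \sigma(G\boxtimes H)/\alpha^*(H)=\sigma(G)$: taking $H$ to be a single vertex (or any complete graph) gives $\alpha^*(H)=1$ and $\sigma(G\boxtimes H)=\sigma(G)$, so the minimum is attained and equals $\sigma(G)$; combined with~(\ref{eq:hales}) rearranged as $\sigma(G\boxtimes H)\ge \sigma(G)\,\alpha^*(H)$ — wait, that is the wrong direction, so instead one uses the symmetric form of~(\ref{eq:hales}) with the roles of $G$ and $H$ swapped, namely $\sigma(G\boxtimes H)\ge \alpha^*(H)\,\sigma(G)$, to see that every $H$ gives a ratio $\ge\sigma(G)$, and equality holds for $H$ complete. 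For the harder direction $\inf_H \sigma(G\boxtimes H)/\sigma(H)=\alpha^*(G)$, I would run the blow-up construction exactly parallel to the proof of eq.~(\ref{eq:rosen-eq-1}): take an optimal \emph{fractional clique cover} of $G$ with rational weights $n(v)/N$, so $\alpha^*(G)=\sigma^*(G)=\frac1N\sum_v n(v)$, set $H_N=\mathrm{Blup}(\overline G,n)$, and show $\sigma(G\boxtimes H_N)\le N$ while $\sigma(H_N)=\alpha^*(\overline{H_N})$-type considerations give $\sigma(H_N)$ comparable to $N/\alpha^*(G)$; more precisely, I expect $\sigma(G\boxtimes H_N)/\sigma(H_N)\to\alpha^*(G)$, using Lemma~\ref{blup} to pass between the blow-up and the weighted graph and Lemma~\ref{incrweight} to control the rounding, just as in Theorem~\ref{thm:alpha-theta}. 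One should double-check whether an actual minimum is attained here or only an infimum — the statement writes $\inf$ on the left and $\min$ only on the second line, which suggests the first is a genuine limit attained asymptotically via the $H_N$ sequence.

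The main obstacle I anticipate is the first part — correctly extracting a fractional clique \emph{cover} of $G$ from a clique cover of $G\boxtimes H$ and verifying the covering constraint $\sum_{v\ni K} (\text{weight}) \ge 1$ for every vertex of $G$ (equivalently every clique in $\overline G$). Rosenfeld's original packing argument used that an independent set in $C\boxtimes H$ meets each slab $C\boxtimes\{w\}$ at most once; the covering dual needs the statement that a clique cover of $G\boxtimes H$, when restricted to a fibre $\{v\}\boxtimes H$, must use at least $\sigma(H)$ distinct cliques — which relies on the fact that $\{v\}\boxtimes H$ is an isometric copy of $H$ inside $G\boxtimes H$ and a clique of $G\boxtimes H$ meets it in a clique of $H$. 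Making this counting precise, and matching it against the LP-dual formulation of $\alpha^*(G)=\sigma^*(G)$, is the crux; once it is in place, the tightness constructions are routine adaptations of the machinery already developed for $\alpha$ in Sections~\ref{sec:activation} and~\ref{sec:rosenfeld}. The cited references Hales~\cite{Hales1973} and McEliece/Posner~\cite{McEliecePosner1971} presumably contain the inequality and the asymptotic tightness respectively, so the write-up can largely be a rephrasing in the paper's uniform ``duality'' language, exactly as was done for Rosenfeld's theorem.
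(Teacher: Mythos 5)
Your handling of the inequality (\ref{eq:hales}) is essentially the paper's argument: from a minimum clique cover $\mathcal{C}$ of $G\boxtimes H$, w.l.o.g.\ by maximal cliques $C\boxtimes D$, one defines $g(C)=\frac{1}{\sigma(H)}\bigl|\{D: C\boxtimes D\in\mathcal{C}\}\bigr|$ and verifies it is a fractional clique cover of $G$ because the cliques of $\mathcal{C}$ meeting a fibre $\{v\}\boxtimes H$ induce a clique cover of $H$ and hence number at least $\sigma(H)$; then $\alpha^*(G)=\sigma^*(G)\le\sum_C g(C)=\sigma(G\boxtimes H)/\sigma(H)$ by the dual LP (\ref{eq:alpha-sigma-star}). (Your aside about restricting to an independent set of size $\alpha(H)$ is a leftover from the packing argument and plays no role; your later formulation in terms of fibres is the correct one.) The claim $\min_H\sigma(G\boxtimes H)/\alpha^*(H)=\sigma(G)$ is also fine: the lower bound for all $H$ is (\ref{eq:hales}) with the roles of $G$ and $H$ swapped, and equality holds for $H$ complete (the paper uses $H=\overline{K}_m$; both work).

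The genuine gap is in the first tightness claim, $\inf_H\sigma(G\boxtimes H)/\sigma(H)=\alpha^*(G)$, where you propose to transplant the blow-up construction from the proof of (\ref{eq:rosen-eq-1}). That mechanism does not transfer. First, ``an optimal fractional clique cover with weights $n(v)/N$'' is not well formed -- a fractional cover weights cliques, not vertices -- so the input to $\mathrm{Blup}(\overline G,n)$ is undefined as stated. Second, and decisively, Rosenfeld's construction works because the blown-up diagonal gives a \emph{lower} bound on $\alpha(G\boxtimes H')$, while $\alpha(H')\le N$ follows from the packing constraints; for $\sigma$ you would instead need an \emph{upper} bound on $\sigma(G\boxtimes H_N)$, for which there is no analogous device, and your claimed bound ``$\sigma(G\boxtimes H_N)\le N$'' is unjustified and false in general (for $G=C_5$ the construction gives $N=2$, $H=C_5$, yet $\sigma(C_5\boxtimes C_5)\ge\alpha^*(C_5)\,\sigma(C_5)=7.5$ by (\ref{eq:hales}) itself). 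Lemmas~\ref{incrweight} and \ref{blup} are moreover stated only for $X\in\{\alpha,\Theta,\vartheta,\alpha^*\}$, not for $\sigma$; a blow-up route could be salvaged, but only by importing the nontrivial fact that $\sigma(\mathrm{Blup}(K,\ell\mathbf{1}))/\ell\to\sigma^*(K)$, which you neither state nor prove. The paper avoids all of this: it quotes the McEliece--Posner theorem $\alpha^*(G)=\inf_n(\sigma(G^{\boxtimes n}))^{1/n}$, writes $\sigma(G^{\boxtimes n})$ as the telescoping product of the ratios $\sigma(G\boxtimes G^{\boxtimes k})/\sigma(G^{\boxtimes k})$, and uses that a geometric mean dominates the minimum to get $\alpha^*(G)\ge\inf_k\sigma(G\boxtimes G^{\boxtimes k})/\sigma(G^{\boxtimes k})\ge\inf_H\sigma(G\boxtimes H)/\sigma(H)$, the reverse inequality being (\ref{eq:hales}). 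So the witnesses are the powers $H=G^{\boxtimes k}$, not blow-ups, and only an infimum is obtained; the paper explicitly leaves open whether any single $H$ attains equality, which your sketch implicitly assumes can be arranged.
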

\begin{proof}
Hales' proof of eq.~(\ref{eq:hales}) is quite similar to the
proof of the Rosenfeld bound (\ref{eq:rosenfeld}), now using the dual LP
for $\alpha^*(G)$, eq.~(\ref{eq:alpha-sigma-star}): 
Consider a minimal clique covering $\mathcal{C}$ of $G\boxtimes H$, w.l.o.g.~only
using maximal cliques, which are of the form $C\boxtimes D$ for cliques 
$C\subset V$ and $D\subset V'$. Define
\[
  g(C) := \frac{1}{\sigma(H)}\bigl| \{ D : C\boxtimes D \in \mathcal{C} \} \bigr|,
\]
and confirm that it is a fractional covering of $G$. Indeed, for every vertex
$v$ of $G$, the set
\[
  \mathcal{D}(v) = \{ D : \exists v\in C\ \text{s.t.}\ C\boxtimes D\in\mathcal{C} \}
\]
is a clique covering of $H$, and so for all $v$,
\[
  \sum_{C\ni v} g(C) = \frac{1}{\sigma(H)}|\mathcal{D}(v)| \geq 1.
\]
On the other hand,
\[
  \alpha^*(G) \leq \sum_{C\text{ clique}} g(C) 
              =    \frac{|\mathcal{C}|}{\sigma(H)} 
              =    \frac{\sigma(G \boxtimes H)}{\sigma(H)}.
\]

Regarding the asymptotic tightness, the second claim is trivial,
taking any $H=\overline{K}_m$. For the first claim, recall
the result of \cite{McEliecePosner1971}, which is the first
step in the following:
\[\begin{split}
  \alpha^*(G) &= \inf_n \bigl( \sigma(G^{\boxtimes n}) \bigr)^{1/n} \\
        &=    \inf_n \left( \prod_{k=0}^{n-1} \frac{\sigma(G \boxtimes G^{\boxtimes k})}
                                                   {\sigma(G^{\boxtimes k})} \right)^{1/n} \\
        &\geq \inf_n \min_{0 \leq k \leq n-1} \frac{\sigma(G \boxtimes G^{\boxtimes k})}
                                                   {\sigma(G^{\boxtimes k})}               \\
        &=    \inf_n \frac{\sigma(G \boxtimes G^{\boxtimes n})}{\sigma(G^{\boxtimes n})}
         \geq \inf_H \frac{\sigma(G \boxtimes H)}{\sigma(H)}, 
\end{split}\]
and the latter we know already to be $\geq \alpha^*(G)$.
\end{proof}

\medskip
\begin{remark}
Comparing with Theorem~\ref{thm:rosenfeld} and its proof, only the 
Rosenfeld-Hales inequalities (\ref{eq:rosenfeld}) and (\ref{eq:hales})
are done in a similar fashion, but the achievability parts are very different.
Indeed, for $\alpha$ we carefully construct a graph $H$ by blowing
up the complement of $G$, attaining equality spot-on.
For $\sigma$ instead we simply consider the sequence $H=G^{\boxtimes k}$ and
get equality asymptotically. 

This raises two questions: First, whether for every $G$ there exists 
an $H$ with $\sigma(G \boxtimes H) = \alpha^*(G)\,\sigma(H)$? And
second, whether
\[
  \sup_n \frac{\alpha(G \boxtimes G^{\boxtimes n})}{\alpha(G^{\boxtimes n})} = \alpha^*(G)?
\]
Or to determine the limit, if it converges to some smaller value $\geq \Theta(G)$.
\end{remark}

\medskip
Going to the disjunctive product, which has more edges, hence 
smaller clique covering numbers, than the strong product,
we have the following relations involving Lov\'{a}sz $\vartheta$'s 
and variants:

\begin{theo}
  \label{thm:chromatic-thetas}
  For any graphs $G$ and $H$,
  \begin{align*}
    \sigma(G \ast H) \geq \vartheta(G)\,\vartheta(H),
    \quad&\text{i.e.}\quad
    \chi(\overline{G} \boxtimes \overline{H}) \geq \vartheta(G)\,\vartheta(H), \\
    \sigma(G \ast H) \geq \vartheta^+(G)\,\vartheta^-(H),
    \quad&\text{i.e.}\quad
    \chi(\overline{G} \boxtimes \overline{H}) \geq \vartheta^+(G)\,\vartheta^-(H).
  \end{align*}
  As a consequence, for every graph $G$,
  \begin{align*}
    \inf_H \frac{\sigma(G \ast H)}{\vartheta(H)}   &\geq \vartheta(G),  \\
    \inf_H \frac{\sigma(G \ast H)}{\vartheta^-(H)} &\geq \vartheta^+(G),
     \qquad
    \inf_H \frac{\sigma(G \ast H)}{\vartheta^+(H)}  \geq \vartheta^-(G),
  \end{align*}
  with the obvious equivalent expressions in terms of the chromatic number.
\end{theo}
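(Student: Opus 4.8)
The plan is to mirror the strategy used for $\vartheta^{\pm}$ in Section~\ref{sec:theta-plus-minus}, but now "dualized" via the de Morgan identity $\overline{G\ast H} = \overline{G}\boxtimes\overline{H}$ so that a lower bound on the clique covering number becomes an upper bound on a packing-type quantity. First I would record that $\sigma(G\ast H) = \chi(\overline{G\ast H}) = \chi(\overline{G}\boxtimes\overline{H})$, and that for any graph $K$ one has $\vartheta(K)\,\vartheta(\overline{K}) \geq |V(K)|$ with $\vartheta(\overline{G}\boxtimes\overline{H}) = \vartheta(\overline{G})\,\vartheta(\overline{H})$; combined with multiplicativity this already suggests where the bound comes from. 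More directly, the cleanest route is through the dual SDPs: a clique cover of $G\ast H$ of size $\sigma(G\ast H)$ is in particular a feasible point for a covering LP, and I want to feed an optimal pair of Lov\'{a}sz-number witnesses for $G$ and $H$ into it.

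The key computation is an SDP/LP sandwich. Recall from \ref{app:optimization} the dual ("covering") characterization of $\sigma$: a minimal clique cover $\mathcal{C}$ of $G\ast H$ may be taken to consist of maximal cliques, and every clique of $G\ast H$ is of the form $C\times D$ with $C$ a clique of $G$ and $D$ a clique of $H$ — wait, that is false for $\ast$; in $G\ast H$ a clique need only project to a clique in at least one coordinate. So instead I would argue at the level of the relevant SDP relaxations of the fractional clique cover. The standard fact is $\sigma(G\ast H) \geq \vartheta(\overline{G\ast H}) = \vartheta(\overline{G}\boxtimes\overline{H})$ by the Sandwich Theorem, and then $\vartheta(\overline{G}\boxtimes\overline{H}) = \vartheta(\overline{G})\,\vartheta(\overline{H})$ by multiplicativity of $\vartheta$ under the strong product. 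But this gives $\vartheta(\overline G)\vartheta(\overline H)$ on the right, not $\vartheta(G)\vartheta(H)$, so the naive chain is not what we want. The correct move is to use instead the fact that $\sigma(G\ast H)$ dominates a quantity built from $G$ and $H$ separately whose value is $\vartheta(G)\vartheta(H)$: concretely, take an optimal orthonormal representation $\{\ket{\phi_v}\}$ with handle $\ket h$ for $\overline G$ (so $\vartheta(G) = \sum_v|\braket h{\phi_v}|^2$) and likewise $\{\ket{\psi_{v'}}\}$, $\ket k$ for $\overline H$, form the tensor product OR $\{\ket{\phi_v}\ox\ket{\psi_{v'}}\}$ with handle $\ket h\ox\ket k$; this is an OR of $\overline{G}\ast\overline{H} = \overline{G\boxtimes H}$, hence a lower bound witness for $\vartheta(G\boxtimes H)$, and one checks it equals $\vartheta(G)\vartheta(H)$. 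Applying this to $G\ast H$ in place of $G\boxtimes H$ — using that $\overline{G\ast H} = \overline G\boxtimes\overline H$ has $\overline{\overline G}\ast\overline{\overline H} = G\boxtimes H$ as complement — and invoking $\vartheta \leq \sigma$ on the graph $G\ast H$ via a feasible fractional clique cover constructed from the OR, yields $\sigma(G\ast H)\geq\vartheta(G)\vartheta(H)$. The $\vartheta^+/\vartheta^-$ version is identical: replace the OR on the $G$-side by a non-negative OR realizing $\vartheta^+(G)$ (cf.~eq.~(\ref{eq:theta-plus-gram})) and on the $H$-side by one realizing $\vartheta^-(H)$, check that the tensor product is of the required sign-type for the relevant parameter of $\overline{G\ast H}$, and chain through $\alpha$ (or the appropriate variant) $\leq\vartheta^{\text{variant}}\leq\sigma$ — here the weighted/tensor version of Lemma~\ref{lem:theta-plus-minus} does the sign bookkeeping automatically, since $\vartheta^-(\overline{G\ast H})\leq\vartheta^+(\overline G)\cdot(\text{something})$ already encodes exactly the inequality chain $\vartheta^+(G)\vartheta^-(H)\leq\vartheta^+(G\ast H)\leq\ldots$ wait, one must be careful about which graph carries $+$ and which carries $-$; the orientation is fixed by Lemma~\ref{lem:theta-plus-minus}'s statement $\vartheta^-(G)\vartheta^+(H)\leq\vartheta^+(G\ast H)$, so applying it to $\overline G,\overline H$ and using $\vartheta^{\pm}(\overline K) = $ the "opposite" bound on $\sigma(K)$ is the mechanical step.

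The two "as a consequence" inequalities are then immediate: divide the displayed product inequalities by $\vartheta(H)$, $\vartheta^-(H)$, $\vartheta^+(H)$ respectively and take the infimum over $H$, noting each divisor is finite and positive for every graph $H$. I would also remark that, unlike Theorems~\ref{thm:alpha-theta} and~\ref{thm:chromatic-rosenfeld}, here only "$\geq$" is claimed — the reverse, i.e.\ that these infima equal $\vartheta(G)$, $\vartheta^+(G)$, $\vartheta^-(G)$, would require an achievability construction, and the vertex-transitive case ($H = \overline G$, where $\sigma(G\ast\overline G) = \chi(\overline G\boxtimes G)$ and equality holds by the known vertex-transitive identities) is the natural starting point for that but is not pursued in this statement.

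The main obstacle I anticipate is the sign/complement bookkeeping in the $\vartheta^+\cdot\vartheta^-$ case: one must track through the de Morgan identity which of $\vartheta^+,\vartheta^-$ controls $\sigma$ from below — and since $\vartheta^+$ and $\vartheta^-$ are \emph{not} exchanged under complementation in a naive way (rather, each is a relaxation of a different side of the Sandwich), it is easy to get the direction of an inequality backwards. The safe way is to never manipulate $\vartheta^{\pm}$ of complements directly, but instead route everything through $\alpha(G\boxtimes H)\leq\vartheta^-(G\boxtimes H)\leq\vartheta^-(G)\vartheta^+(H)\leq\vartheta^+(G\ast H)\leq\sigma(G\ast H)$ — the weighted chain of Lemma~\ref{lem:theta-plus-minus} already supplies the first three links, and $\vartheta^+\leq\alpha^*\leq\sigma$ from the "francesinha" supplies the last — so that the only genuinely new content is recognizing $\sigma(G\ast H)$ as an upper bound for $\vartheta^+(G\ast H)$, which is the Sandwich Theorem applied to $\overline{G\ast H}$. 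For the plain-$\vartheta$ case there is no sign issue and the argument is just multiplicativity plus Sandwich.
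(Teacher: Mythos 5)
Your final argument is correct and coincides with the paper's proof: the Sandwich Theorem gives $\sigma(G\ast H)\geq\vartheta(G\ast H)=\vartheta(G)\,\vartheta(H)$, and for the second display $\sigma(G\ast H)\geq\vartheta^+(G\ast H)\geq\vartheta^+(G)\,\vartheta^-(H)$, the first step from the ``francesinha'' chain $\vartheta^+\leq\alpha^*\leq\sigma$ and the second from Lemma~\ref{lem:theta-plus-minus} (with the roles of $G$ and $H$ swapped); the ``consequences'' then follow by dividing and taking infima, exactly as you say. One correction: the ``naive chain'' you reject is in fact precisely this proof --- you mis-stated the Sandwich Theorem as $\sigma(K)\geq\vartheta(\overline K)$, whereas it reads $\sigma(K)=\chi(\overline K)\geq\vartheta(K)$ for the \emph{same} graph $K$, so taking $K=G\ast H$ immediately puts $\vartheta(G)\,\vartheta(H)$ on the right with no complements appearing. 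The tensor-product-of-orthonormal-representations detour (which also contains the slip $\overline{\overline G\boxtimes\overline H}=G\boxtimes H$; the correct identity is $\overline{\overline G\boxtimes\overline H}=G\ast H$) is therefore unnecessary, though harmless, since it merely re-derives the multiplicativity $\vartheta(G\ast H)=\vartheta(G)\,\vartheta(H)$ that the paper simply cites.
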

\begin{proof}
According to the Sandwich Theorem, cf.~\cite{Knu94}, 
\[
  \sigma(G\ast H) \geq \vartheta(G\ast H) = \vartheta(G)\,\vartheta(H).
\]
In fact, it is even known that \cite{Szegedy1994}
\[
  \sigma(G\ast H) \geq \vartheta^+(G\ast H) \geq \vartheta^+(G)\,\vartheta^-(H),
\]
where we have invoked Lemma~\ref{lem:theta-plus-minus}.
\end{proof}

\medskip
\begin{remark}
We do not know whether any of the infima in Theorem~\ref{thm:chromatic-thetas}
is actually equal to the given lower bounds; but comparison with
Theorems~\ref{thm:alpha-theta} and \ref{thm:alpha-theta-plus-minus} suggests
this as a distinct possibility.

However, intrinsically perhaps most interesting is the question of
determining
\begin{equation}
  \label{eq:chi-activation}
  \zeta(G) := \inf_H \frac{\sigma(G \ast H)}{\sigma(H)} 
            = \inf_H \frac{\chi(\overline{G} \boxtimes H)}{\chi(H)},
\end{equation}
of which we can trivially say that it is not larger than the
\emph{Witsenhausen rate}~\cite{Witsenhausen1976}
\[
  R_W(\overline{G}) = \inf_n \bigl( \sigma(G^{\ast n}) \bigr)^{1/n}
                    = \inf_n \bigl( \chi(\overline{G}^{\boxtimes n}) \bigr)^{1/n},
\]
by considering $H = G^{\ast k}$. By analogy with 
Theorem~\ref{thm:rosenfeld}, one might expect some kind of 
fractional combinatorial parameter, but we are not even aware of
nontrivial lower bounds on (\ref{eq:chi-activation}).
\end{remark}

\section{Discussion}
\label{sec:discussion}
Many natural graph parameters arising as combinatorial
optimization problems, such as independence number or chromatic
number, are not generally multiplicative under graph products,
but due to their nature retain super-multiplicativity
($\alpha$ under the strong product) or sub-multiplicativity
($\sigma$, $\chi$ under both the strong and disjunctive product),
and this extends to numerical parameters such as 
$\vartheta^{\pm}$. Some few, concretely the fractional packing
and clique covering number, and the Lov\'{a}sz number
miraculously turn out to be multiplicative (the first under
strong products, the second under both strong and disjunctive
products).
For the others, there is the nontrivial problem of characterizing
the regularizations
\[
  \Theta(G) = \sup_n \bigl( \alpha(G^{\boxtimes n}) \bigr)^{1/n},
  \quad
  R_W(\overline{G}) = \inf_n \bigl( \sigma(G^{\ast n}) \bigr)^{1/n},
  \quad
  R^*(\overline{G}) = \inf_n \bigl( \sigma(G^{\boxtimes n}) \bigr)^{1/n},
\]
only the last of which is known: McEliece and Posner showed it to
equal the fractional packing number $\alpha^*(G)$ \cite{McEliecePosner1971}.

In the present paper, we diverted from this consideration
of the behaviour of graph parameters under the product of
many copies of $G$, and looked more broadly how they are affected by
products with a generic other graph $H$. After showing that
the Lov\'{a}sz number is asymptotically attained by the independence number
for every graph $G$ when activated by suitable graphs $H$, we embarked on 
a study of tight upper bounds on the independence number of
graph products in terms of products of individual, ``dual'', graph parameters.
We could give some examples of such pairs, but have not been able
to construct a general theory.

\medskip
There are many questions left to be answered. For example,
what are the pairs of dual graph parameters for $\widetilde\alpha$,
the entanglement-assisted independence number (beyond the self-dual
$\vartheta$)? 

Some of the most intriguing questions arise around dual pairs
of which one is the same function as the pivot; already the
determination of the other quantity in the bound, i.e.~for example
\[
  \sup_H \frac{\Theta(G\boxtimes H)}{\Theta(H)}
   \quad\text{or}\quad
  \sup_H \frac{\widetilde{\alpha}(G\boxtimes H)}{\widetilde{\alpha}(H)},
\]
is highly nontrivial. The first one is easily seen to be $\leq \alpha^*(G)$,
so the question is whether there is a gap; for the second one we do not
even have a target. In the same category falls the determination
of $\zeta(G)$ in eq.~(\ref{eq:chi-activation}). All these quantities
are of the type of \emph{potential capacities} -- cf.~\cite{YangWinter2014},
where they are studied in  detail for the ordinary classical, quantum,
private and other capacities of quantum channels.

\section*{Acknowledgments}
We thank
Jan Bouda, Tobias Fritz, Anthony Leverrier, Laura Man\v{c}inska,
Giannicola Scarpa, Simone Severini and Dan Stahlke
for various illuminating discussions.

AA is supported by the European Research Council (CoG ``QITBOX''), 
the AXA Chair in Quantum Information Science, the John Templeton Foundation, 
the Spanish MINECO (Severo Ochoa Grant SEV-2015-0522 and 
FOQUS FIS2013-46768-P), and the Generalitat de Catalunya (SGR875).
RD is supported in part by the Australian Research Council under 
Grant DP120103776 (together with AW) and by the National Natural Science Foundation 
of China under Grants 61179030; furthermore in part by an ARC 
Future Fellowship under Grant FT120100449.
DER is supported by the U.K.~EPSRC.
ABS was or is supported by the European Research Council (CoG ``QITBOX''
and AdG ``NLST''), Chist-Era project ``DIQIP'',
the Spanish MINECO (project FIS2010-14830), and the FPU:AP2009-1174 PhD grant.
AW is supported by the European Commission (STREP ``RAQUEL''), the European Research 
Council (AdG ``IRQUAT''), the Spanish MINECO (project FIS2008-01236) 
with the support of FEDER funds, and the Generalitat de Catalunya
(CIRIT project 2014-SGR-966).

\appendix

\section{Fractional and semidefinite relaxations of the independence number}
\label{app:optimization}
Here we collect several known, and a couple of new, useful characterizations of 
$\vartheta$, $\vartheta^-$ and $\vartheta^+$ as optimization
problems, in particular SDPs. The graph will always be an unweighted
graph $G=(V,E)$, although all of the formulas below have analogues
with weights, cf.~Knuth's \cite{Knu94}. Recall the Lov\'{a}sz convention
of denoting confusability of vertices as $v\sim w$, meaning equality ($v=w$)
or an edge ($vw \in E$). 
An \emph{orthonormal representation (OR)} of $G$ is an assignment of
unit vectors $\{ \ket{\phi_v} \}$ in some (real) vector space to all 
vertices $v\in V$, such that $\bra{\phi_v} \phi_w \rangle = 0$
for all $v \not\sim w$.

\bigskip
All of the following are from Lov\'{a}sz \cite{Lovasz1979}:
\begin{align}
  \label{eq:theta-norm}
  \vartheta(G) &= \max \| \1+T \| \text{ s.t. } \1+T \geq 0,\ T_{vw} = 0 \ \forall v\sim w    \\
  \label{eq:theta-max}
               &= \max \tr BJ     \text{ s.t. } B\geq 0,\ \tr B=1,\ B_{vw}=0\ \forall vw\in E \\
  \label{eq:theta-min}
               &= \min \lambda    \text{ s.t. } Z \geq J,\ Z_{vv}=\lambda\ \forall v,\ 
                                                           Z_{vw}=0\ \forall v\not\sim w      \\
  \label{eq:theta-umbrella}
               &= \min \left( \max_v \frac{1}{|\bra{h}\phi_v\rangle|^2} \right) \text{ s.t. } 
                       \{ \ket{\phi_v} \} \text{ is an OR of } G,\ \ket{h} \text{ unit vector}.
\end{align}
Here, $J$ is the all-ones matrix.

Observe that in eq.~(\ref{eq:theta-norm}), $\1+T$ is precisely the 
Gram matrix $\bigl[\bra{\phi_v}\phi_w\rangle\bigr]_{vw}$ of an orthonormal
representation of $\overline{G}$, and by the definition of the 
operator norm,
\begin{equation}
  \label{eq:theta-gram}
  \vartheta(G) = \| \1+T \| = \sum_v |\bra{h}\phi_v\rangle|^2,
\end{equation}
for an eigenvector $\ket{h}$ of the largest eigenvalue of
$\sum_v \proj{\phi_v}$, which has the same spectrum as $\1+T$.

\bigskip
There are analogous formulas for Schrijver's $\vartheta^-$, the second and third
are from \cite{Schrijver1979,MRR1978}, the fourth is due to de Carli Silva 
and Tun\c{c}el \cite[Cor.~4.2]{deCarliSilva-Tuncel2013}; see also \cite{deCarliSilva2013}:
\begin{align}
  \label{eq:theta-minus-norm}
  \vartheta^-(G) &= \max \| \1+T \| \text{ s.t. } \1+T \geq 0,\ T_{vw}\geq 0\ \forall v,w,\ 
                                                                T_{vw} = 0 \ \forall v\sim w    \\
  \label{eq:theta-minus-max}
                 &= \max \tr BJ     \text{ s.t. } B\geq 0,\ \tr B=1,\ B_{vw}\geq 0\ \forall v,w,\ 
                                                                B_{vw}=0\ \forall vw\in E       \\
  \label{eq:theta-minus-min}
                 &= \min \lambda    \text{ s.t. } Z \geq J,\ Z_{vv}=\lambda\ \forall v,\ 
                                                             Z_{vw} \leq 0\ \forall v\not\sim w \\
  \label{eq:theta-minus-umbrella}
                 &= \min \left( \max_v \frac{1}{|\bra{h}\phi_v\rangle|^2} \right) \text{ s.t. }
                         \{ \ket{\phi_v} \} \text{ is an obtuse rep.~of } G,\ 
                         \ket{h} \text{ consistent unit vector}.
\end{align}
Here, an \emph{obtuse representation} of $G$ is an assignment of
unit vectors $\{ \ket{\phi_v} \}$ to all vertices $v\in V$ such that
$\bra{\phi_v} \phi_w \rangle \leq 0$ for all $v \not\sim w$, and 
a unit vector $\ket{h}$ is called \emph{consistent} if $\bra{h}\phi_v\rangle \geq 0$
for all $v$ \cite{deCarliSilva-Tuncel2013}.
The first relation, eq.~(\ref{eq:theta-minus-norm}), is proved by equating
it with eq.~(\ref{eq:theta-minus-max}): Namely, observe that
\[
  \| \1+T \| = \max_{\ket{\beta}} \bra{\beta} (\1+T) \ket{\beta}
             = \max_{\ket{\beta}} \tr \proj{\beta}(\1+T)
             = \max_{\ket{\beta}} \tr \bigl( \proj{\beta} \circ (\1+T) \bigr)J,
\]
where the maximization is over unit vectors $\ket{\beta}$ with 
components $\beta_v$, $\proj{\beta}$ is the projection onto $\CC\ket{\beta}$,
and $\circ$ is the Schur/Hadamard (entry-wise) product of matrices.
To attain the maximum, w.l.o.g.~all vector components $\beta_v\geq 0$,
so all entries $\beta_v\beta_w$ of $\proj{\beta}$ are non-negative, and so 
$B = \proj{\beta} \circ (\1+T)$ is feasible for eq.~(\ref{eq:theta-minus-max}).
Conversely, any such $B$ we can write as $B = \proj{\beta} \circ (\1+T)$
with a unit vector $\ket{\beta}$ with non-negative components, and
a matrix $T$ feasible for eq.~(\ref{eq:theta-minus-norm}).

Note that in eq.~(\ref{eq:theta-minus-norm}), $\1+T$ is precisely the 
Gram matrix $\bigl[\bra{\phi_v}\phi_w\rangle\bigr]_{vw}$ of a 
\emph{non-negative} orthonormal representation of $\overline{G}$, 
i.e.~$\bra{\phi_v} \phi_w \rangle \geq 0$ for all $v$ and $w$ and
$\bra{\phi_v} \phi_w \rangle = 0$ for $vw\in E$.
By the definition of the operator norm,
\begin{equation}
  \label{eq:theta-minus-gram}
  \vartheta^-(G) = \| \1+T \| = \sum_v |\bra{h}\phi_v\rangle|^2
\end{equation}
for an eigenvector $\ket{h}$ of the largest eigenvalue of
$\sum_v \proj{\phi_v}$, which has the same spectrum as $\1+T$. 
Furthermore, one may assume $\bra{h}\phi_v\rangle \geq 0$ for all $v\in V$.
This is due to the Perron-Frobenius theorem~\cite{HornJohnson}, 
which guarantees that the Gram matrix $\bigl[\bra{\phi_v}\phi_w\rangle\bigr]_{vw}$
has a unit eigenvector $\ket{\mu} = \sum_v \mu_v\ket{v}$ with non-negative
entries $\mu_v$ for the largest eigenvalue $\theta = \| \1+T \|$:
\[
  \| \1+T \| = \sum_{uv} \mu_v \mu_w \bra{\phi_v} \phi_w \rangle = \bra{X}X\rangle,
\]
with $\ket{X} = \sum_v \mu_v \ket{\phi_v} =: \sqrt{\theta}\ket{h}$. 
By construction, $\bra{h}\phi_v\rangle \geq 0$, and one can check by
direct calculation that 
\[
  \left( \sum_v \proj{\phi_v} \right) \ket{h} = \theta \ket{h}.
\]

\bigskip
For Szegedy's $\vartheta^+$ \cite{Szegedy1994}, instead, we have:
\begin{align}
  \label{eq:theta-plus-norm}
  \vartheta^+(G) &= \max \| \1+T \|_+ \text{ s.t. } \1+T \geq 0,\ T_{vw} \leq 0 \ \forall v\sim w    \\
  \label{eq:theta-plus-max}
                 &= \max \tr BJ     \text{ s.t. } B\geq 0,\ \tr B=1,\ B_{vw}\leq 0\ \forall vw\in E \\
  \label{eq:theta-plus-min}
                 &= \min \lambda    \text{ s.t. } Z \geq J,\ Z_{vv}=\lambda\ \forall v,\ 
                                                             Z_{vw}\geq 0\ \forall v,w,\ 
                                                             Z_{vw}= 0\ \forall v\not\sim w,        \\
  \label{eq:theta-plus-umbrella}
                 &= \min \left( \max_v \frac{1}{|\bra{h}\phi_v\rangle|^2} \right) \text{ s.t. }
                         \{ \ket{\phi_v} \} \text{ is a non-negative OR of } G,\ 
                         \ket{h} \text{ consistent unit vector}.
\end{align}
Here, $\| X \|_+ = \max_{\ket{\psi}\text{ positive}} |\bra{\psi} X \ket{\psi}|$
is the maximum overlap of the matrix $X$ with a \emph{positive} unit
vector $\ket{\psi} = \sum_v \psi_v \ket{v}$, i.e.~one with non-negative 
entries $\psi_v$. Note that $\| \cdot \|_+$ is a norm on matrices,
which we call the \emph{positive operator norm}.
Eqs.~(\ref{eq:theta-plus-max}) and (\ref{eq:theta-plus-min}) are the
original definitions from~\cite{Szegedy1994}, while eq.~(\ref{eq:theta-plus-umbrella})
is stated in~\cite[Prop.~2.1]{Szegedy1994}. [Note that the latter was
claimed without the requirement that $\bra{h}\phi_v\rangle\geq 0$ for all 
$v\in V$, but this comes out naturally from the equivalence proof with 
(\ref{eq:theta-plus-min}).]

To prove eq.~(\ref{eq:theta-plus-norm}), we start from 
eq.~(\ref{eq:theta-plus-max}) and observe
\[
  \| \1+T \|_+ = \max_{\ket{\beta}} \bra{\beta} (\1+T) \ket{\beta}
               = \max_{\ket{\beta}} \tr \proj{\beta}(\1+T)
               = \max_{\ket{\beta}} \tr \bigl( \proj{\beta} \circ (\1+T) \bigr)J,
\]
where the maximization is over unit vectors $\ket{\beta}$ with 
components $\beta_v \geq 0$. Thus, all entries 
$\beta_v\beta_w$ of the matrix $\proj{\beta}$ are non-negative, and so 
$B = \proj{\beta} \circ (\1+T)$ is feasible for eq.~(\ref{eq:theta-plus-max}).
Conversely, any such $B$ we can write as $B = \proj{\beta} \circ (\1+T)$
with a unit vector $\ket{\beta}$ with non-negative components, and
a matrix $T$ feasible for eq.~(\ref{eq:theta-plus-norm}).

Note that in eq.~(\ref{eq:theta-plus-norm}), $\1+T$ is precisely the 
Gram matrix $\bigl[\bra{\phi_v}\phi_w\rangle\bigr]_{vw}$ of an
obtuse representation of $\overline{G}$, 
i.e.~unit vectors with $\bra{\phi_v} \phi_w \rangle \leq 0$ for $vw\in E$.

In Section~\ref{sec:theta-plus-minus}, we need the following formulation
of $\vartheta^+$:
\begin{equation}
  \label{eq:theta-plus-gram}
  \vartheta^+(G) = \max \sum_v |\bra{h}\phi_v\rangle|^2 \text{ s.t. }
                         \{ \ket{\phi_v} \} \text{ is an obtuse rep.~of } \overline{G},\ 
                         \ket{h} \text{ consistent unit vector}.
\end{equation}

\begin{proof}
Let $B$ be an optimal solution in eq.~(\ref{eq:theta-plus-max}) for $G$. 
Since $B$ is positive semidefinite, there exist vectors $\ket{\psi_v}$ for 
$v \in V$, such that $B_{vw} = \bra{\psi_v}\psi_w\rangle$.  
Let $\ket{\Psi} = \sum_v \ket{\psi_v}$ and note that 
$\bra{\Psi}\Psi\rangle = \sum_{vw} B_{vw} = \tr BJ$ is the objective function
value of the solution $B$. 
Furthermore, $\bra{\psi_v}\Psi\rangle$ is the $v$-th row sum of $B$. 
Let $\ket{\phi_v} = \frac{1}{\| \ket{\psi_v} \|_2}\ket{\psi_v}$.
(If the numerator is $0$, let $\ket{\phi_v}$ be a unit vector orthogonal to 
all others, by moving to a higher dimension if necessary.)
Also, let $\ket{h} = \frac{1}{\| \ket{\Psi} \|_2}\ket{\Psi}$. 
We will show that this is a solution for~(\ref{eq:theta-plus-gram}) 
of value at least $\tr BJ$.

First, if $vw \in E$ is an edge, then
\[
  \bra{\phi_v}\phi_w\rangle = \frac{\bra{\psi_v}\psi_w\rangle}{\|\ket{\psi_v}\|_2 \|\ket{\psi_w}\|_2} 
                            = \frac{B_{vw}}{\|\ket{\psi_v}\|_2 \|\ket{\psi_w}\|_2} \leq 0,
\]
so we have an obtuse representation. Second,
\[
  \bra{h}\phi_v\rangle = \frac{\bra{\Psi}\psi_v\rangle}{\|\ket{\Psi}\|_2 \|\ket{\psi_v}\|_2} \geq 0,
\]
since $\bra{\Psi}\psi_v\rangle$ is the $v$-th row sum of $B$ which is nonnegative by 
Lemma~\ref{lem:nonneg} below. Thus, $\ket{h}$ is a consistent vector for the
obtuse representation.
Note that we should be a bit careful and point out that for 
$\ket{\psi_v} = 0$ the above inner products are $0$ by our choice of $\ket{\phi_v}$.
Third, the objective function:
Let $S$ be the set of indices of the nonzero rows of $B$, 
i.e.~$S=\{v: \ket{\psi_v} \neq 0\}$. We now have, using $B_{vv} \geq 0$,
\[
  \sum_v |\bra{h}\phi_v\rangle|^2 
           = \sum_{v \in S} \frac{|\bra{\Psi}\psi_v\rangle|^2}{\|\ket{\Psi}\|_2^2 \|\ket{\psi_v}\|_2^2}
           = \frac{1}{\tr BJ} \sum_{v \in S} B_{vv} \left|\frac{\bra{\Psi}\psi_v\rangle}{B_{vv}}\right|^2.
\]
Noting furthermore $\sum_v B_{vv} = 1$, we can use
Jensen's inequality to the convex function $x^2$, to obtain
\begin{align*}
\sum_v |\bra{h}\phi_v\rangle|^2 
           &\geq \frac{1}{\tr BJ} \left( \sum_{v \in S} B_{vv} \frac{\bra{\Psi}\psi_v\rangle}{B_{vv}} \right)^2 \\
           &=    \frac{1}{\tr BJ} \left( \sum_{v \in S} \bra{\Psi}\psi_v\rangle \right)^2 
            =    \frac{1}{\tr BJ} \bra{\Psi}\Psi\rangle^2 
            =    \tr BJ.
\end{align*}
This proves that the optimal solution to~(\ref{eq:theta-plus-gram}) 
is at least as large as the optimal solution to~(\ref{eq:theta-plus-max}). 

We now prove the opposite inequality.
Let $\ket{h}$, $\ket{\phi_v}$ for $v \in V$ be a solution 
for~(\ref{eq:theta-plus-gram}) of value $\theta := \sum_v |\bra{h}\phi_v\rangle|^2$. 
Define
\[
  \ket{\psi_v} = \frac{1}{\sqrt{\theta}} \proj{\phi_v}\,\ket{h},
\]
and let $B$ be the Gram matrix of these $\ket{\psi_v}$,
i.e.~$B_{vw}=\bra{\psi_v}\psi_w\rangle$.
We will show that $B$ is a solution for~(\ref{eq:theta-plus-max}) 
of value at least $\theta$. To start, as a Gram matrix, it is positive
semidefinite.

First, for an edge $vw\in E$, we have that
\[
  B_{vw} =    \bra{\psi_v}\psi_w\rangle 
         =    \frac{1}{\theta} \bra{h}\phi_v\rangle \bra{\phi_v}\phi_w \rangle \bra{\phi_w} h\rangle
         \leq 0,
\]
since $\bra{h}\phi_v\rangle \geq 0$ for all $v$ 
and $\bra{\phi_v}\phi_w \rangle \leq 0$ for $vw\in E$. 
Second,
\[
  \tr B = \sum_v \bra{\psi_v}\psi_v\rangle 
        = \frac{1}{\theta} \sum_v |\bra{h}\phi_v\rangle|^2 
        = 1.
\]
Finally, letting $M = \sum_v \proj{\phi_v}$, we have
\[\begin{split}
  \tr BJ  =    \sum_{vw} \bra{\psi_v}\psi_w\rangle                                        
         &=    \frac{1}{\theta} \sum_{vw} \langle h \proj{\phi_v} \proj{\phi_w} h \rangle \\
         &=    \frac{1}{\theta} \bra{h} M^2 \ket{h}                                       \\
         &\geq \frac{1}{\theta} \bra{h} M \proj{h} M \ket{h}                              
          =    \frac{1}{\theta} \left( \sum_v |\bra{h}\phi_v\rangle|^2 \right)^2
          =    \theta,
\end{split}\]
where in the third line we have used $\proj{h} \leq \1$,
hence $M^2= M \1 M \geq M\proj{h}M$.
This proves that the optimal solution to~(\ref{eq:theta-plus-max}) 
is at least as large as the optimal solution to~(\ref{eq:theta-plus-gram}),
concluding the proof. 
\end{proof}

\begin{lem}
  \label{lem:nonneg}
  If $B$ is an optimal solution to~(\ref{eq:theta-plus-max}), 
  then the row sum of any nonzero row of $B$ is positive.
\end{lem}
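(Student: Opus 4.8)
The plan is to prove this by a local perturbation argument: if some nonzero row of an optimal $B$ had non-positive sum, one could perturb $B$ to a feasible matrix with strictly larger objective value, contradicting optimality. First I would dispose of trivialities: if $V=\emptyset$ there is nothing to prove, and taking $B=\proj{v_0}$ for a single vertex $v_0$ (feasible in~(\ref{eq:theta-plus-max}), objective $1$) shows $\vartheta^+(G)\geq 1$, so the optimal $B$ satisfies $\tr BJ=\vartheta^+(G)>0$. Also, since $B\geq 0$, a row $v$ of $B$ is nonzero precisely when $B_{vv}>0$. So assume for contradiction that $B_{vv}>0$ but $s_v:=\sum_w B_{vw}\leq 0$.

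The perturbation I would use shrinks the $v$-th coordinate: for $t\in[0,1]$ set $B(t)=D_t\,B\,D_t$, where $D_t$ is diagonal, equal to $1-t$ in position $v$ and $1$ elsewhere (equivalently, multiply the $v$-th row and the $v$-th column of $B$ by $1-t$). Then $B(t)\geq 0$ as a congruence of a positive semidefinite matrix, and for every edge $uw$ we have $B(t)_{uw}\leq 0$, because the entries touching row/column $v$ are rescaled by the non-negative factor $1-t$ and all others are unchanged. Two short computations give $\tr B(t)=1-t(2-t)B_{vv}$ and $\tr B(t)J=\tr BJ-2t\,s_v+t^2B_{vv}$. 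The degenerate case $B_{vv}=1$ forces $B=\proj{v}$ (all other diagonal entries vanish, hence so do their rows and columns), whose $v$-th row sum is $1>0$, contradicting $s_v\leq 0$; so $B_{vv}<1$, and then $\tr B(t)>0$ for all $t\in(0,1]$, which makes $\widehat B(t):=B(t)/\tr B(t)$ feasible for~(\ref{eq:theta-plus-max}).

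It remains to see that the objective strictly increases. Writing $\phi(t):=\tr B(t)J/\tr B(t)$, we have $\phi(0)=\tr BJ=\vartheta^+(G)$, and the quotient rule gives
\[
  \phi'(0)=-2s_v+2\,B_{vv}\,\tr BJ>0,
\]
because $s_v\leq 0$ while $B_{vv}>0$ and $\tr BJ>0$. Hence $\phi(t)>\vartheta^+(G)$ for all sufficiently small $t>0$, so $\widehat B(t)$ is feasible with strictly larger objective value than $B$, contradicting optimality; therefore every nonzero row of $B$ has strictly positive row sum.

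The computations of $\tr B(t)$, $\tr B(t)J$ and $\phi'(0)$ are routine; the one thing to get right is the feasibility of $\widehat B(t)$ --- that rescaling a row together with the matching column preserves positive semidefiniteness and the edge sign constraints --- together with the observation, encoded in the sign of $\phi'(0)$, that the renormalization factor $1/\tr B(t)$ is too weak to cancel the first-order gain $-2t\,s_v+t^2B_{vv}$ in the numerator. I expect this interplay between the gain in $\tr B(t)J$ and the loss in $\tr B(t)$ to be the only genuinely delicate point.
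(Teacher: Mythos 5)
Your proof is correct and is essentially the paper's own argument: the paper simply takes your perturbation at the endpoint $t=1$, i.e.\ it zeroes out the $v$-th row and column outright (which preserves positive semidefiniteness via the Gram picture, strictly increases the entry sum because $2s_v - B_{vv} < 0$, and strictly decreases the trace) and then rescales to trace one. Your derivative computation at $t=0$, together with the separate disposal of the degenerate case $B_{vv}=1$, is a slightly more laborious route to the same contradiction.
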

\begin{proof}
Suppose, by contradiction, that $B$ is an optimal solution 
to~(\ref{eq:theta-plus-max}) and that the $v$-th row of $B$ is 
nonzero and has non-positive row sum. Since $B \geq 0$, we have that $B_{vv} \geq 0$,
with equality if and only if the $v$-th row is all zero. 
This implies that $B_{vv} > 0$ and thus $\sum_{w \ne v} B_{vw} \leq - B_{vv} < 0$. 
Therefore, changing both the $v$-th row and column to zeros strictly 
increases the sum of the entries of $B$, while decreasing the trace. 
Note that this change keeps $B$ positive semidefinite, since it is 
equivalent to changing a vector in the Gram representation of $B$ to 
the zero vector. Therefore we can positively scale this new matrix to 
have trace $1$ and greater sum of all entries, giving us a better solution 
to~(\ref{eq:theta-plus-max}), a contradiction.
\end{proof}

%
%

\medskip
In this paper, we also looked at weighted Lov\'{a}sz numbers $\vartheta(G,p)$
and variants $\vartheta^{\pm}(G,p)$. These are defined by replacing the 
all-ones matrix $J$ in eqs.~(\ref{eq:theta-max}), (\ref{eq:theta-minus-max}) 
and (\ref{eq:theta-plus-max}) by the weights matrix 
$\Pi = \bigl[ \sqrt{p(v)p(w)} \bigr]_{vw}$. The other formulas are
changed accordingly; in particular eqs.~(\ref{eq:theta-umbrella}), 
(\ref{eq:theta-minus-umbrella}) and (\ref{eq:theta-plus-umbrella})
simply receive the weight $p(v)$ in the numerator.

\bigskip
Finally, we record here the mutually dual LPs of fractional packing and 
fractional clique covering:
\begin{equation}\begin{split}
  \label{eq:alpha-sigma-star}
  \alpha^*(G) &= \max \sum_v t_v \text{ s.t. } t_v\geq 0\ \forall v,\ 
                          \sum_{v\in C} t_v \leq 1\ \forall \text{ cliques }C\subset V \\
              &= \min \sum_C s_C \text{ s.t. } s_C\geq 0\ \forall \text{ cliques }C\subset V,\ 
                          \sum_{C\ni v} s_C \geq 1\ \forall v.
\end{split}\end{equation}
In particular, $\alpha(G) \leq \alpha^*(G) \leq \sigma(G)$.

Cf.~the very nice book \cite{ScheinermanUllman1997} for details on these,
where it is also discussed that a natural notion of 
\emph{fractional colouring} and \emph{fractional chromatic number}
leads to the same LP.

One thing we can check easily is the multiplicativity of $\alpha^*$
under strong graph products: 
$\alpha^*(G \boxtimes H) = \alpha^*(G)\,\alpha^*(H)$.
Indeed, since the product of primal feasible solutions
for $\alpha^*(G)$ and $\alpha^*(H)$ is feasible for $\alpha^*(G\boxtimes H)$,
we obtain ``$\geq$''. Likewise, ``$\leq$'' follows by observing that
the product of dual feasible solutions of the two graphs is dual
feasible for $\alpha^*(G\boxtimes H)$.


\bibliographystyle{elsarticle-num}


\end{document}